\documentclass[journal,twoside,web]{ieeecolor}
\usepackage{generic}
\usepackage{bm}   % for bold
\usepackage{amsmath,amssymb,amsfonts}
\usepackage{indentfirst}  % for indent
\usepackage{graphicx}  % for pics
\usepackage{subfigure}
\usepackage{cite}  % for citation and reference
\usepackage{color}
\usepackage[american]{babel}
\usepackage{microtype}
\usepackage{cases}
\usepackage{enumerate}
\usepackage{hyperref}
\hypersetup{hidelinks=true}
\usepackage{textcomp}
\def\BibTeX{{\rm B\kern-.05em{\sc i\kern-.025em b}\kern-.08em
    T\kern-.1667em\lower.7ex\hbox{E}\kern-.125emX}}
\makeatletter
\@ifundefined{IEEEproof}{\newenvironment{IEEEproof}{\proof}{\endproof}}{}
\@ifundefined{ifCLASSOPTIONcaptionsoff}{\newif\ifCLASSOPTIONcaptionsoff\CLASSOPTIONcaptionsofffalse}{}

\newtheorem{theorem}{Theorem}
\newtheorem{corollary}{Corollary}
\newtheorem{lemma}{Lemma}
\newtheorem{definition}{Definition}
\newtheorem{remark}{Remark}
\newtheorem{assumption}{Assumption}

\def\mV{\mathcal{V}}
\def\mE{\mathcal{E}}

\def\mG{\mathcal{G}}

\def\mS{\mathcal{S}}
\def\mW{\mathcal{W}}
\def\mbR{\mathbb{R}}
\def\mbC{\mathbb{C}}

\def\mR{\mathcal{R}}
\def\Lg{\bm{L}}
\def\Lgz{\bm{L}_0}
\def\Lgtau{\bm{L}_{\tau}}
\def\Lgtauim{\bm{L}_{\tau}^\textup{im}}
\def\Htau{\bm{H}_{\tau}}
\def\tiLgtau{\widehat{\bm{L}}_{\tau}}
\def\tiLgz{\widehat{\bm{L}}_{0}}
\def\dmax{\overline{\lambda}}

\def\taumax{\overline{\tau}}

\def\reff{r_{\textup{eff}}}

\def\upj{\textup{j}}
\def\reff{r^\textup{eff}}
\newcommand{\re}[1]{#1^\textup{re}}

\begin{document}
% paper title
% can use linebreaks \\ within to get better formatting as desired
% Do not put math or special symbols in the title.
\title{Consensusability of Continuous-Time Multi-Agent Systems With Unbounded Heterogeneous Constant Delays: A Signed Laplacian Perspective}

\author{Yue~Song,~\IEEEmembership{Member,~IEEE,}
          ~Mengqi~Xue,~\IEEEmembership{Member,~IEEE,}
          ~Jiazuo~Hou,~\IEEEmembership{Member,~IEEE,}
          ~Yuxi~Lu,~\IEEEmembership{Member,~IEEE,}
          ~Qi~Liu,~\IEEEmembership{Member,~IEEE}
\thanks{The authors are with the National Key Laboratory of Autonomous Intelligent Unmanned Systems, Tongji University, Shanghai 201210, China (e-mails: ysong@tongji.edu.cn, x\_starter@hotmail.com, houjz@tongji.edu.cn, yuxilu@tongji.edu.cn, qiliu@tongji.edu.cn).}
}

% The paper headers
\markboth{}  % , ~Vol.~, No.~, ~2016}
{Song \MakeLowercase{\textit{et al.}}: }
% The only time the second header will appear is for the odd numbered pages
% after the title page when using the twoside option.

% make the title area
\maketitle

% As a general rule, do not put math, special symbols or citations
% in the abstract or keywords.
\begin{abstract}
The consensus of continuous-time multi-agent systems with unbounded and heterogeneous constant delays is investigated by combining frequency-domain analysis and algebraic graph theory.
Several types of signed Laplacians are constructed to characterize consensusability under delays.
The core results are established based on the defined delay-embedded signed Laplacian, where a small-delay link creates a cooperative interaction and a possibly unbounded large-delay link creates an antagonistic interaction between the agents.
The dividing line between small and large delays is given by $\tau_{ij}=\pi/2\lambda_{\max}(\Lgz)$, where $\lambda_{\max}(\Lgz)$ refers to the maximum eigenvalue of the conventional graph Laplacian.
It is proved that the consensusability is preserved if the delay-embedded signed Laplacian is positive semi-definite with a simple zero eigenvalue.
Moreover, we derived some consensus conditions in terms of the extended effective resistance which measures the overall coupling between two sets of agents.
The obtained results provide new insights into the mechanism of delayed consensus from the interplay between the small-delay-induced cooperativeness and large-delay-induced antagonism in the underlying network topology.
\end{abstract}

% Note that keywords are not normally used for peerreview papers.
\begin{IEEEkeywords}
    consensus, delay, unbounded, heterogeneous, signed Laplacian
\end{IEEEkeywords}

% For peer review papers, you can put extra information on the cover
% page as needed:
% \ifCLASSOPTIONpeerreview
% \begin{center} \bfseries EDICS Category: 3-BBND \end{center}
% \fi
%
% For peerreview papers, this IEEEtran command inserts a page break and
% creates the second title. It will be ignored for other modes.
\IEEEpeerreviewmaketitle

%IEEEhowto:kopka
\section{Introduction}
Consensus is one of the most representative examples of the collective behaviors among agents in network systems.
Due to its fundamental importance, intensive attention has been paid to the mechanism and application of network consensus~\cite{olfati2007consensus,liu2026enhancing}.
Since the communication environments are imperfect and disturbances are inevitable in real-world situations, time delays commonly exist in the interactions between agents which may degrade the system performance or even destabilize the system.
Preserving consensusability (i.e., the ability of a multi-agent system to reach a consensus) under delays is a topic of both theoretical and practical value in the study of network systems.

\indent
Given a delay profile for each link in a multi-agent system, the consensusability can be computed by checking if there exist right-half-plane (RHP) roots of the system characteristic equation via various numerical methods such as Rekasius transformation \cite{munz2009stability,delice2012delay}, Pad\'{e} approximants and Chebyshev discretization \cite{milano2016small}.
In parallel, analytical studies have been conducted to reveal the mechanism of how the delays impact consensus. The frequency-domain methods usually lead to explicit conditions for consensus \cite{olfati2004consensus,bliman2008average,ma2022delay} but only apply to simplified cases, i.e., the delays are identical or the model is linear.
The time-domain methods, which are based on the Lyapunov-Krasovskii stability theory or Razumikhin stability theory, has a wider range of applicability but with more implicit result interpretations. It constructs a proper Lyapunov function/functional to quantify the dynamics and usually leads to consensus condition in the form of linear matrix inequalities (LMIs) \cite{jing2004lmi}.
We summarize the existing analytical results as follows with a classification of the delay types they handled.

\indent
1) Bounded identical delays.
For the linear consensus model, a fundamental result was obtained in \cite{olfati2004consensus} saying that the consensusability is preserved if the identical delay $\tau$ satisfies $\tau< \frac{\pi}{2\lambda_{\max}(\Lgz)}$, where $\lambda_{\max}(\Lgz)$ denotes the maximum eigenvalue of the graph Laplacian matrix $\Lgz$ (defined in the conventional sense). It concisely captures how the topological features influence the system's robustness to delays.
The later studies extend the analysis to consensus models with more general and complicated factors, such as agent's self dynamics \cite{xu2019consensusability}, switching networks \cite{lin2010consensus}, stochastic networks \cite{wang2010global}, delayed feedbacks \cite{ma2022delay}, and performance metrics of consensus dynamics over undirected graphs \cite{ghaedsharaf2019performance} and directed graphs \cite{dezfulian2022performance}.
These results also imply that a smaller $\lambda_{\max}(\Lgz)$ gives a better consensusability.

2) Bounded heterogeneous delays. It is proved in \cite{bliman2008average} that the threshold $\lambda_{\max}(\Lgz)$ also works for heterogeneous delays, i.e., the consensusability of linear networks is preserved if the delay of each link $\tau_{ij}$ satisfies $\tau_{ij}< \frac{\pi}{2\lambda_{\max}(\Lgz)}$.
In case of time-varying delays or more complicated consensus models, the problem becomes much harder.
A majority of research resorted to time-domain approaches and assumed bounded delays and bounded time derivative of delays to enable the construction of Lyapunov functions/functionals and associated LMI-based consensus conditions, e.g., \cite{zhang2018quantized,wang2020asynchronous,jiang2022multi}, to name just a few.
Usually those LMIs are feasible only when the bounds of the delays and their time derivatives are rather small.

3) Unbounded delays. Due to the difficulty caused by infinity, there are fewer works in this direction and the results are ad-hoc to specific models such as multi-agent systems without self-delays \cite{liu2010consensus}, systems under distributed delays \cite{bi2025consensus}, neural networks \cite{zhang2023master}, impulsive systems \cite{xiao2020stabilization} and Filippov systems \cite{kong2024filippov}.
Overall, there is a lack of intuitive interpretation for the role of network topology in reaching a consensus with heterogeneous and unbounded delays.

\indent
In this paper, we adopt the strategy of ``frequency-domain analysis over networks'' to characterize the heterogeneously- and unboundedly-delayed consensus in terms of the Laplacian matrices of some specially designed signed graphs carrying the delay information.
The following contributions are made.

First, we define the freqency-domain signed Laplacian which combines the strengths of both frequency-domain analysis and algebraic graph theory in characterizing system consensusability.
It is proved that the consensusability is preserved if this Laplacian matrix is positive semi-definite (PSD) with a simple zero eigenvalue at any point in the right half plane (RHP).
This result paves the way for leveraging graph theoretical tools to establish more explicit conditions for multi-agent consensus under heterogeneous delays.

Second, we construct the delay-embedded signed Laplacian which is a frequency-independent constant matrix induced from the freqency-domain signed Laplacian.
In the delay-embedded signed Laplacian, a small-delay positive-weight link ($a_{ij}>0$, $\tau_{ij} < \frac{\pi}{2\lambda_{\max}(\Lgz)}$) creates a cooperative interaction, while a large-delay link ($a_{ij}>0$, $\tau_{ij} > \frac{\pi}{2\lambda_{\max}(\Lgz)}$) or negative-weight link ($a_{ij}<0$) induces an antagonistic interaction between the agents, which quantifies the impact of heterogeneous delays in an explicit manner.
We prove that a consensus is achieved if this Laplacian is PSD with a simple zero eigenvalue.

Third, combining our previous work on the extended effective resistance \cite{song2019extension}, we derive consensus conditions in terms of the total coupling between two sets of agents considering the aggregate effects from the cooperative interactions of small-delay links and antagonistic interactions of large-delay links.
The established theory reveals how consensusability is preserved by the network topology absorbing the impact of heterogeneous delays.
Since our analysis does not rely on the assumption of bounded delays, the obtained result can handle delays of arbitrary values. In particular, it provides new insights into the case of such large delays that the traditional consensus criteria based on bounded delays fail to work.

\indent
The rest of the paper is organized as follows.
Section \ref{secformu} formulates the consensus model with unbounded heterogeneous delays.
Section \ref{secfreq} and Section \ref{secdelay} define several signed Laplacians and carry out consensus analysis.
Section \ref{seccase} illustrates the theoretical results by some numerical examples.
Section \ref{secconclu} concludes the paper.

\section{Problem formulation}\label{secformu}
\subsection{Basic notations}
Let $\mbC$, $\mbR$ denote the set of complex and real numbers, respectively.
Given a matrix $\bm{x}\in\mbC^{p\times q}$, let $\bm{x}^T$ denote its transpose and $\bm{x}^*$ its conjugate transpose.
%For an Hermitian matrix $\bm{A}$, $\bm{A}\succ0$ ($\bm{A}$ is PSD) means $\bm{A}$ is positive definite (positive semi-definite).
The notation $|\cdot|$ has different implications in different situations---$|x|$ denotes the modulus (or cardinality) if $x$ is a scalar (or a set).
$\bm{I}_p\in \mbR^{p\times p}$ denotes an identity matrix. $\bm{1}_{p}\in \mbR^{p}$ denotes a vector with all entries being one.
The italic $j$ denotes a numbering index, and the upright j denotes the square root of -1.

\subsection{Delay-embedded graph notations}
In this paper, we consider a connected weighted undirected graph to model the network with delays over which the consensus protocol will be carried out.
We denote the \textit{delay-embedded graph} $\mG$ by the tuple $\mG(\mV,\mE,\bm{A},\bm{\tau})$ where $\mV=\{1,2,...,n\}$ is the set of nodes, $\mE\subseteq\mV\times\mV$ is the set of edges, $\bm{A}=[a_{ij}]\in\mbR^{n\times n}$ ($n=|\mV|$) is the weighted adjacency matrix and $\bm{\tau}=[\tau_{ij}]\in\mbR^{n\times n}$ is the delay matrix.
An undirected edge between node $i$ and node $j$ is denoted by the unordered pair $(i,j)\in\mE$, i.e., $(i,j)\in\mE$ is equivalent to $(j,i)\in\mE$.
%The set of neighbors of node $i$ is denoted by $\mN_i=\{j|~j\in\mV,(i,j)\in\mE\}$.
The adjacency matrix $\bm{A}$ is defined such that $a_{ii}=0$, $a_{ij}=a_{ji}$ denotes the weight of edge $(i,j)\in\mE$ ($a_{ij}$ is allowed to be positive or negative), and $a_{ij}=a_{ji}=0$ if $(i,j)\notin\mE$.
%The degree of node $i$ is defined as $d_i=\sum\nolimits_{j=1}^n a_{ij}$.
The matrix $\bm{\tau}$ is defined such that $\tau_{ii}=0$, $\tau_{ij}=\tau_{ji}\geq0$ denotes the delay in edge $(i,j)\in\mE$, and $\tau_{ij}=\tau_{ji}=0$ if $(i,j)\notin\mE$.
The value of $\tau_{ij}$ is allowed to be unbounded in this paper.

\indent
The graph Laplacian matrix $\Lg(w_{ij})=[L_{ij}]\in\mbR^{n\times n}$ is defined by $L_{ii}=\sum_{j=1,j\neq i}^n w_{ij}$ and $L_{ij}=L_{ji}=-w_{ij}$, $\forall i\neq j$ where $w_{ij}$ is a certain characterization of the edge $(i,j)\in\mE$ and $w_{ij}=0$ if $(i,j)\notin\mE$.
It is free to set the values of $w_{ij}$ for different purposes, and $w_{ij}$ can be either positive or negative, where a positive (negative) $w_{ij}$ implies a cooperative (antagonistic) interaction between two nodes.
We use $\Lgz$ to denote the conventional Laplacian matrix where $w_{ij}=a_{ij}$, $\forall (i,j)\in\mE$, i.e., $\Lgz$ just reflects the topological feature of $\mG$ and the delays are not taken into consideration.
If there exist some edges $(i,j)\in\mE$ with $w_{ij}<0$, then the graph is referred to as a \textit{signed graph} and the Laplacian matrix is called a \textit{signed Laplacian} as it differs from the usual case where the off-diagonal entries are all non-positive.
In the following we will define several types of signed Laplacians tailored for consensus analysis under heterogeneous delays.

\subsection{Consensus with unbounded heterogeneous delays}
Consider a network where a group of agents interact via some links.
We assume the links have heterogeneous delays due to that the real-world links (e.g., communication links) are vulnerable to random disturbances and the delays in different links can be highly different or even unbounded.
Also some links may have zero or negligible delays if they are realized by electrical/mechanical connection or high-reliability telecommunication.
By treating the agents and links as nodes and edges, this network can be modeled by a delay-embedded graph $\mG(\mV,\mE,\bm{A},\bm{\tau})$, where $\mV$ collects the agents, $\mE$ collects the set of links, $\bm{A}$ describes the network topology and $\bm{\tau}$ collects the delay information of the links.

\indent
Then, let us consider the consensus protocol over graph $\mG(\mV,\mE,\bm{A},\bm{\tau})$
\begin{equation}\label{consensus}
\begin{split}
     \dot{x}_i(t) = \sum\nolimits_{j=1}^n a_{ij}(x_j(t-\tau_{ij}) - x_i(t-\tau_{ij})).
\end{split}
\end{equation}
Applying the Laplace transform to \eqref{consensus} gives
\begin{equation}\label{consensus-s}
\begin{split}
     s X_i(s) - x_i(0) = \sum\nolimits_{j=1}^n a_{ij}e^{-\tau_{ij}s}(X_j(s) - X_i(s))
\end{split}
\end{equation}
where $X_i(s)$ denotes the Laplace transform of $x_i(t)$.
Let $\bm{X}(s)=[X_i(s)]\in\mbC^n$ and $\bm{x}(t)=[x_i(t)]\in\mbR^n$, then \eqref{consensus-s} can be reformulated into the compact form
\begin{equation}\label{consensus-X}
\begin{split}
     \bm{X}(s) = \Htau^{-1}(s) \bm{x}(0)
\end{split}
\end{equation}
where $\Htau(s)=[H_{\tau,ij}(s)]\in\mbC^{n\times n}$ is given by
\begin{equation}\label{sLap}
\begin{split}
         H_{\tau,ij}(s) =
         \left\{
           \begin{array}{ll}
             -a_{ij}e^{-\tau_{ij}s}, &i\neq j \\
             s + \sum_{j=1,j\neq i}^n a_{ij}e^{-\tau_{ij}s}, &i=j.
           \end{array}
         \right.
\end{split}
\end{equation}

\indent
Recall that a transmission zero of a transfer matrix refers to a point where transfer matrix loses its normal rank, and a pole of a transfer matrix refers to a root of the denominator of its pole polynomial \cite{skogestad2005multivariable}.
Since $\Htau(s)$ normally has a full rank (e.g., when $s$ has a sufficiently large real part), we have the following definition for the zeros and poles of $\Htau(s)$.
\begin{definition}\label{def-pole}
    The transfer matrix $\Htau(s)$ has a zero at $s=z_0$ if $\textup{det}(\Htau(z_0))=0$, and has a pole at $s=p_0$ if $\textup{det}(\Htau(p_0))\rightarrow\infty$.
\end{definition}

\indent
Moreover, let us introduce the concept of consensus and a basic assumption below.
\begin{definition}
    For protocol \eqref{consensus}, a consensus is said to be reached if $\lim_{t\rightarrow \infty} \bm{x}(t)=c\bm{1}_n$ where $c\in\mbR$ is an arbitrary number.
    Further, an average consensus is said to be reached if $\lim_{t\rightarrow \infty} \bm{x}(t)=c\bm{1}_n$ where $c=\frac{1}{n}\bm{1}_n^T\bm{x}(0)$.
\end{definition}

\begin{assumption}\label{assump1}
   $\Lgz$ is PSD with a simple zero eigenvalue.
\end{assumption}

\indent
This assumption naturally holds in absence of negative $a_{ij}$, and we reasonably extend this property to the more generic graphs studied in this paper.
Assumption \ref{assump1} directly leads to two quick results \cite{bapat2010graphs}: 1) the eigenvector of the unique zero eigenvalue of $\Lgz$ is $\bm{1}_n$, which will be used in the following analysis; 2) system \eqref{consensus} will reach an average consensus in case of no delays, which provides a basis for the consensusability check under non-zero delays.

\indent
The existing works \cite{bliman2008average} already proved that an average consensus can still be reached under small delays bounded in terms of the largest Laplacian eigenvalue (i.e., $\tau_{ij}< \frac{\pi}{2\lambda_{\max}(\Lgz)}$).
The next section will generalize this Laplacian-based characterization of consensusability to the case of heterogeneous and possibly unbounded delays.

\begin{remark}
There are other types of delayed consensus protocol \cite{munz2012delay}, e.g., the protocol without self-delay $\dot{x}_i(t) = \sum\nolimits_{j=1}^n a_{ij}(x_j(t-\tau_{ij}) - x_i(t))$.
But this protocol only reaches a consensus, while \eqref{consensus} can possibly achieve an average consensus.
As an average consensus guarantees that the sum of the states is an invariant quantity, it is fundamentally more useful in various applications including information fusion, distributed optimization and training \cite{neal2011distributed}. So we adopt \eqref{consensus} and focus on its average consensusability.
\end{remark}

\section{Frequency-domain signed Laplacian}\label{secfreq}
For a better clarity of the following analysis, in some places we will express $s\in\mbC$ as $s=r + \upj \omega$ where $r,\omega\in\mbR$ denotes the real and imaginary part of $s$, respectively.
Let us begin with the concept of frequency-domain Laplacian matrix below.

\begin{definition}[Frequency-domain signed Laplacian]\label{def-sLap}
    Given a graph $\mG(\mV,\mE, \bm{A}, \bm{\tau})$, characterize each edge $(i,j)\in\mE$ by
    \begin{equation}\label{}
    \begin{split}
        w_{ij}(s)\big|_{s=r + \upj \omega}=a_{ij}e^{-\tau_{ij}r}\cos(\tau_{ij}\omega)
    \end{split}
    \end{equation}
    and the frequency-domain signed Laplacian $\Lgtau(s)=[L_{\tau,ij}(s)]\in\mbR^{n\times n}$ is defined by $L_{\tau,ii}(s)=\sum_{j=1,j\neq i}^n w_{ij}(s)$ and $L_{\tau,ij}(s)=L_{\tau,ji}(s)=-w_{ij}(s)$, $i\neq j$.
\end{definition}

\indent
The frequency-domain Laplacian is a signed Laplacian as it is possible to have $L_{\tau,ij}(s)>0$ at some $s\in\mbC$.
Considering that $\Htau(s)$ is complex symmetric, let us extract its real part $\re{\Htau}(s)$ by the following manipulation
\begin{equation}\label{LgtauHtau}
\begin{split}
     \re{\Htau}(s)=\frac{1}{2}[\Htau(s)+\Htau^*(s)] = r\bm{I}_n + \Lgtau(s).
\end{split}
\end{equation}
%where the superscript $*$ denotes the conjugate transpose.
It is trivial to verify that both $\re{\Htau}(s)$ and $\Lgtau(s)$ are real symmetric and $\Lgtau(0)=\re{\Htau}(0)=\Htau(0)=\Lgz$.
It will be seen later that $\re{\Htau}(s)$ serves as a key intermediate to connect $\Htau(s)$ and $\Lgtau(s)$.

\indent
We establish some lemmas below as a basis.
%\begin{assumption}\label{assump1}
%   $\Lgz$ has a simple zero eigenvalue with eigenvector $\bm{1}_n$.
%\end{assumption}

%\begin{assumption}\label{assump2}
%   There is no pole-zero cancellation in transfer matrix $\Htau(s)$.
%\end{assumption}

%\indent
%In fact, Assumption \ref{assump1} is true if $\mG$ is connected and $a_{ij}>0$, $\forall (i,j)\in\mE$. We assume this property extends to the general case with the existence of $a_{ij}<0$.
%In addition, since we focus on a network where the delays in the links are highly heterogeneous, it is unlikely to have perfect pole-zero cancellations in $\Htau(s)$ and hence Assumption \ref{assump2} is reasonable.

\begin{lemma}[\cite{chen2007final}]\label{lem-FVT}
    Let $\bm{f}(t)$ be a continuous signal which has Laplace transform $\bm{F}(s)$.
    Assume every pole of $\bm{F}(s)$ is either in the open left half plane (LHP) or at the origin, and $\bm{F}(s)$ has at most a single pole at the origin.
    Then $\lim_{t\rightarrow\infty} \bm{f}(t)=\lim_{s\rightarrow0} s\bm{F}(s)$.
\end{lemma}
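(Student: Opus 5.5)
The plan is the standard route to the final value theorem: reduce the vector statement to scalar ones, then combine partial fractions with the time-domain behaviour of each term. Since $\bm{F}(s)$ is the Laplace transform of $\bm{f}(t)$ componentwise, both sides of $\lim_{t\rightarrow\infty}\bm{f}(t)=\lim_{s\rightarrow0}s\bm{F}(s)$ can be taken entrywise. It therefore suffices to treat a scalar continuous signal $f(t)$ with transform $F(s)$, all of whose poles lie in the open LHP except possibly a simple pole at $s=0$.

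\textbf{Step 1 (splitting off the origin pole).} Write $F(s)=\frac{c}{s}+G(s)$, where $c=\lim_{s\rightarrow0}sF(s)$ is the residue at the origin, with $c=0$ if there is no pole there. Then $G(s)$ has all its poles in the open LHP. The term $c/s$ is the transform of the constant $c$ for $t\geq0$, so $g(t)=f(t)-c$ is a continuous signal whose transform $G$ is analytic on a half-plane $\textup{Re}\,s>-\epsilon$ for some $\epsilon>0$.

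\textbf{Step 2 (decay of $g$).} I want to show $g(t)\rightarrow0$, which yields $\lim_{t\rightarrow\infty}f(t)=c=\lim_{s\rightarrow0}sF(s)$. In the rational case this is immediate: partial fractions write $g$ as a finite sum of terms $t^k e^{p t}$ with $\textup{Re}\,p<0$, each of which decays. In the general case, which matters here because $\Htau^{-1}(s)$ is transcendental, I would invert the transform by the Bromwich integral. The contour is shifted from $\textup{Re}\,s=\sigma_0>0$ to $\textup{Re}\,s=-\epsilon/2$, giving $|g(t)|\le Ce^{-\epsilon t/2}\rightarrow0$.

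\textbf{Main obstacle.} This contour shift requires growth control of $G$ on vertical strips, so that the horizontal segments vanish and the shifted integral converges. Such control is not implied by pole locations alone for general transforms. The cleanest way around it is to follow the cited source \cite{chen2007final}, where the hypothesis is interpreted so that the pole conditions guarantee $f$ is a finite combination of modes; in practice that means $F$ is rational or a meromorphic function with the needed decay. For the application to $\bm{F}(s)=\Htau^{-1}(s)\bm{x}(0)$, I would instead note that the retarded delay system has only finitely many roots in any right half-plane $\textup{Re}\,s>-\epsilon$. Known spectral decomposition results for retarded functional differential equations then give exponential decay of the solution component associated with roots in $\textup{Re}\,s<0$, and this substitutes for the contour estimate.
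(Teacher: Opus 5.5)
The paper gives no proof of this lemma; it only cites \cite{chen2007final}. Your partial-fraction argument is a complete proof when $\bm{F}$ is rational. For transcendental $\bm{F}$, which is the case the paper needs, your proposal has a gap, and no argument can close it, because the statement as written (with no rationality or growth hypothesis) is false.

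Step~1 already overreaches. ``All poles in the open LHP'' does not give a pole-free strip $\textup{Re}\,s>-\epsilon$: a transcendental $G$ can have infinitely many poles whose real parts tend to $0$, as happens for neutral-type delay equations.

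More decisively, take $f(t)=\sin(t^2)$, which is continuous and bounded. Split $\sin(t^2)=(e^{\upj t^2}-e^{-\upj t^2})/(2\upj)$ and rotate the integration path to $t=e^{\pm\upj\pi/4}u$. Each piece of $F(s)=\int_0^\infty\sin(t^2)e^{-st}\,dt$ becomes a Gaussian integral $\int_0^\infty e^{-u^2-se^{\pm\upj\pi/4}u}\,du$, which is entire in $s$. So $F$ has no poles and the hypotheses hold vacuously. Since $F(0)=\sqrt{\pi/8}$ is finite, $sF(s)\to0$ as $s\to0$, yet $f(t)$ has no limit.

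So the obstacle you flag in Step~2 cannot be handed off to the citation: pole locations alone never suffice. The lemma needs an extra hypothesis, for example:
\begin{itemize}
\item $\bm{F}$ is rational; or
\item $\bm{F}$ is analytic on $\textup{Re}\,s\ge-\epsilon$ apart from a simple pole at $0$, and $\bm{F}(s)-\bm{c}/s-\bm{d}/(s+1)=O(|s|^{-2})$ as $|s|\to\infty$ in that half-plane, for some constant vectors $\bm{c},\bm{d}$.
\end{itemize}
The second condition is exactly what makes your Bromwich contour shift legitimate.

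Your fallback for the application is the right repair, and you do not need the general spectral theory of retarded equations for it. The finiteness fact you cite, plus boundedness of the delay terms, gives exactly the second hypothesis for $\bm{F}(s)=\Htau^{-1}(s)\bm{x}(0)$:
\begin{itemize}
\item $\det\Htau(s)$ is a retarded quasi-polynomial, so it has finitely many zeros in $\textup{Re}\,s>-1$. If the nonzero ones lie in the open LHP, there is $\epsilon\in(0,1)$ such that the simple zero at $0$ is the only zero in $\textup{Re}\,s\ge-\epsilon$.
\item The delayed entries are bounded on that half-plane, so $\Htau^{-1}(s)=s^{-1}\bm{I}_n+O(|s|^{-2})$ there as $|s|\to\infty$.
\item Take $\bm{c}=\frac{1}{n}\bm{1}_n\bm{1}_n^T\bm{x}(0)$ (the residue at $0$) and $\bm{d}=\bm{x}(0)-\bm{c}$. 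Then the remainder $\bm{F}(s)-\bm{c}/s-\bm{d}/(s+1)$ is analytic on $\textup{Re}\,s\ge-\epsilon$ and absolutely integrable along $\textup{Re}\,s=-\epsilon$.
\item Your contour shift then gives $\bm{x}(t)-\bm{c}=O(e^{-\epsilon t})$.
\end{itemize}
This rigorously supplies what Lemma~\ref{lem-zeropole} uses. It proves a specialised lemma, however, not the statement as written.
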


\begin{lemma}\label{lem-eigen}
    %For any $s\in\mbC$, $\Htau(s)$ has a simple eigenvalue $s$ with eigenvector~$\bm{1}_n$ and has a single zero at the origin.
    $\Htau(s)$ has a single zero at the origin.
    %$\re{\Htau}(s)$ has a simple eigenvalue $\textup{Re}\{s\}$ with eigenvector~$\bm{1}_n$.
\end{lemma}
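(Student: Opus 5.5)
We need to show two things: $\det(\Htau(0))=0$, and this zero at the origin is simple, i.e. $\det(\Htau(s))$ has a simple root at $s=0$. The first part is immediate. Each row of $\Htau(s)$ sums to $s$, because the diagonal entry $s+\sum_{j\neq i}a_{ij}e^{-\tau_{ij}s}$ cancels the off-diagonal entries $-a_{ij}e^{-\tau_{ij}s}$. Hence
\begin{equation*}
\Htau(s)\bm{1}_n=s\bm{1}_n \quad \text{for every } s\in\mbC .
\end{equation*}
Thus $s$ is always an eigenvalue of $\Htau(s)$ with eigenvector $\bm{1}_n$. In particular $\Htau(0)=\Lgz$ is singular.

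For simplicity, I would factor out this eigenvalue explicitly. Take the real orthogonal matrix $\bm{Q}=[\bm{1}_n/\sqrt{n},\ \bm{U}]$, where the columns of $\bm{U}\in\mbR^{n\times(n-1)}$ form an orthonormal basis of $\bm{1}_n^\perp$. Since $\Htau(s)$ is complex symmetric and $\Htau(s)\bm{1}_n=s\bm{1}_n$, we also have $\bm{1}_n^T\Htau(s)=s\bm{1}_n^T$. Therefore
\begin{equation*}
\bm{Q}^T\Htau(s)\bm{Q}=\begin{bmatrix} s & \bm{0}\\ \bm{0} & \bm{U}^T\Htau(s)\bm{U}\end{bmatrix},
\end{equation*}
and so $\det(\Htau(s))=s\cdot g(s)$ with $g(s)=\det(\bm{U}^T\Htau(s)\bm{U})$. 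The function $g$ is entire, since every entry of $\Htau(s)$ is a finite combination of exponentials.

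The key step, and the only one using Assumption \ref{assump1}, is to show $g(0)\neq0$. At $s=0$ we have $\bm{U}^T\Htau(0)\bm{U}=\bm{U}^T\Lgz\bm{U}$. This is the restriction of $\Lgz$ to $\bm{1}_n^\perp$. By Assumption \ref{assump1}, $\Lgz$ is PSD with its only zero eigenvalue having eigenvector $\bm{1}_n$, so its remaining $n-1$ eigenvalues are positive with eigenvectors in $\bm{1}_n^\perp$. Hence $\bm{U}^T\Lgz\bm{U}$ is positive definite, and $g(0)=\prod_{k=2}^n\lambda_k(\Lgz)>0$. Consequently $\det(\Htau(s))=s\,g(s)$ with $g(0)\neq 0$, so the origin is a simple root.

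The main obstacle is interpretive rather than computational. We must make precise that "single zero" means multiplicity one of $s=0$ as a root of the analytic function $\det(\Htau(s))$, consistent with Definition \ref{def-pole}. It should not be read as the claim that there are no other zeros, which would be false in general. A secondary point is justifying the block-diagonalization: the off-diagonal blocks $\bm{U}^T\Htau(s)\bm{1}_n=s\bm{U}^T\bm{1}_n=\bm{0}$ and $\bm{1}_n^T\Htau(s)\bm{U}=s\bm{1}_n^T\bm{U}=\bm{0}$ vanish, which uses the symmetry $a_{ij}=a_{ji}$ and $\tau_{ij}=\tau_{ji}$. An equivalent route is to compute $\frac{d}{ds}\det(\Htau(s))\big|_{s=0}$ via Jacobi's formula and the adjugate of $\Lgz$, which equals $\frac{1}{n}\prod_{k\ge2}\lambda_k(\Lgz)\,\bm{1}_n\bm{1}_n^T$ times $n$. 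Note that $\frac{d}{ds}\Htau(0)=\bm{I}_n-\Lg(a_{ij}\tau_{ij})$ has the same row-sum structure, so the delay term drops out against $\bm{1}_n$.
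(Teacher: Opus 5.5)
Your proof is correct and follows the same idea as the paper. Both arguments split off the eigenvalue $s$ with eigenvector $\bm{1}_n$ and use the simplicity of the zero eigenvalue of $\Lgz$ to show that the remaining factor of $\det(\Htau(s))$ does not vanish at $s=0$. The difference is in execution: the paper argues by contradiction through the product of eigenvalues, saying informally that the other eigenvalues ``contain the factor $s$''. Your deflation $\det(\Htau(s))=s\,\det(\bm{U}^T\Htau(s)\bm{U})$, with an entire second factor equal to $\prod_{k\ge 2}\lambda_k(\Lgz)\neq 0$ at the origin, makes that step rigorous and direct.
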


\begin{IEEEproof}
   %It is easy to verify that $\Htau(s)$ has an eigenvalue $s$ with eigenvector~$\bm{1}_n$ and has a zero at the origin.
   It is easy to verify that $\Htau(s)$ has a zero at the origin since $\Htau(0)=\Lgz$.
   Next let us prove its uniqueness by contradiction.
   %Suppose $\Htau(s)$ has $k$ ($k\geq 2$) multiple eigenvalues $s$, then $\textup{det}(\lambda\bm{I}_n-\Htau(s))$ must contain the factor $(\lambda-s)^k$.
   %At the origin ($s=0$), this factor reduces to $\lambda^k$ and it also holds $\textup{det}(\lambda\bm{I}_n-\Htau(0))=\textup{det}(\lambda\bm{I}_n-\Lgtau(0))$, which means $\textup{det}(\lambda\bm{I}_n-\Lgtau(0))$ contains the factor $\lambda^k$. Thus, $\Lgtau(0)=\Lgz$ contains multiple zero eigenvalues, which contradicts to the fact that $\Lgz$ has a simple zero eigenvalue.
   Suppose $\Htau(s)$ has $k$ ($k\geq 2$) zeros at the origin, then $\textup{det}(\Htau(s))$ takes the form $\textup{det}(\Htau(s))=s^k\cdot N(s)/D(s)$. % where $D(0)\neq 0$ due to the absence of pole-zero cancellation.
   Since $\textup{det}(\Htau(s))=\Pi_{i=1}^n \lambda_i(\Htau(s))$ and $\Htau(s)$ has an eigenvalue $s$, there exist other eigenvalues of $\Htau(s)$ whose expression contain the factor $s$ and become zero at the origin.
   It implies that $\Htau(0)=\Lgz$ has multiple zero eigenvalues, which contradicts Assumption \ref{assump1}.
\end{IEEEproof}

\begin{lemma}\label{lem-zeropole}
    If $\Htau(s)$ has a single zero at the origin and all the other zeros are in the open LHP, then system \eqref{consensus} reaches an average consensus.
\end{lemma}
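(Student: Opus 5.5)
The plan is to apply the final value theorem (Lemma \ref{lem-FVT}) to $\bm{X}(s)=\Htau^{-1}(s)\bm{x}(0)$ from \eqref{consensus-X}. Its hypotheses require that every pole of $\bm{X}(s)$ lie in the open LHP or at the origin, with at most a simple pole at the origin. We then evaluate $\lim_{s\to0}s\Htau^{-1}(s)\bm{x}(0)$ and show that it equals $\frac{1}{n}\bm{1}_n\bm{1}_n^T\bm{x}(0)$.

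\textbf{Step 1: pole locations.} Write $\Htau^{-1}(s)=\textup{adj}(\Htau(s))/\textup{det}(\Htau(s))$. Every entry of $\Htau(s)$ is entire, being a polynomial in $s$ and in the $e^{-\tau_{ij}s}$, so the adjugate is entire. Hence the poles of $\Htau^{-1}(s)$, and therefore of $\bm{X}(s)$, can only occur at zeros of $\textup{det}(\Htau(s))$, that is, at zeros of $\Htau(s)$ in the sense of Definition \ref{def-pole}. By hypothesis these lie in the open LHP except for a single zero at the origin. That zero is simple as a root of the determinant, as given in the statement and by Lemma \ref{lem-eigen}. So $\bm{X}(s)$ has at most a simple pole at $0$.

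There are two technical points here. First, continuity of $\bm{x}(t)$ holds because solutions of \eqref{consensus} with continuous initial history are continuous. Second, the initial history must be taken constant, equal to $\bm{x}(0)$ on $[-\taumax,0]$, which is implicit in \eqref{consensus-s}. I would state both conventions explicitly.

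\textbf{Step 2: residue at the origin.} We need $\lim_{s\to0}s\Htau^{-1}(s)$. Note that $\Htau(s)\bm{1}_n=s\bm{1}_n$ for every $s$, since each row of the Laplacian-type part sums to zero. Because $\Htau$ is complex symmetric, also $\bm{1}_n^T\Htau(s)=s\bm{1}_n^T$. Hence $\Htau^{-1}(s)\bm{1}_n=s^{-1}\bm{1}_n$ and $\bm{1}_n^T\Htau^{-1}(s)=s^{-1}\bm{1}_n^T$.

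Decompose $\bm{x}(0)=c\bm{1}_n+\bm{v}$ with $c=\frac1n\bm{1}_n^T\bm{x}(0)$ and $\bm{1}_n^T\bm{v}=0$. Then
\[
s\Htau^{-1}(s)\bm{x}(0)=c\bm{1}_n+s\Htau^{-1}(s)\bm{v},
\]
so it suffices to show $\lim_{s\to0}s\Htau^{-1}(s)\bm{v}=\bm{0}$.

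Since the pole at $0$ is simple, $\Htau^{-1}(s)=\bm{R}/s+\bm{G}(s)$ with $\bm{G}$ analytic near $0$. Multiplying by $s$ and letting $s\to0$ gives $\bm{R}=\lim_{s\to0} s\Htau^{-1}(s)$. From $\Htau(s)\Htau^{-1}(s)=\bm{I}_n$, multiplying by $s$ and letting $s\to0$ yields $\Lgz\bm{R}=\bm{0}$. Likewise $\bm{R}\Lgz=\bm{0}$. Assumption \ref{assump1} makes $\ker\Lgz=\textup{span}\{\bm{1}_n\}$, so the columns and rows of $\bm{R}$ are multiples of $\bm{1}_n$, which forces $\bm{R}=\alpha\bm{1}_n\bm{1}_n^T$. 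Applying $\bm{R}$ to $\bm{1}_n$ and using $s\Htau^{-1}(s)\bm{1}_n=\bm{1}_n$ gives $\alpha n=1$. Therefore $\bm{R}\bm{v}=\bm{0}$ and $\lim_{t\to\infty}\bm{x}(t)=c\bm{1}_n$, which is average consensus.

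\textbf{Main obstacle.} The delicate step is Step 1: justifying that "zeros of $\Htau$" in Definition \ref{def-pole} exhaust the singularities of $\bm{X}(s)$, and that the zero at the origin gives a \emph{simple} pole of $\Htau^{-1}$ rather than merely a simple root of the determinant. The adjugate argument handles the first part. For simplicity of the pole, I would rely on the hypothesis "single zero" interpreted as a simple root of $\textup{det}\,\Htau$; a simple root of the determinant automatically gives a simple pole of the inverse.

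A further subtlety is that, with delays, infinitely many zeros can accumulate toward the imaginary axis. For retarded-type equations like \eqref{consensus}, however, the zeros in any right half-plane $\textup{Re}\,s\ge-\epsilon$ are finite in number, so the hypothesis gives a uniform margin and the final value theorem applies.
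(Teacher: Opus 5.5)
Your proof is correct. Step 1, locating the poles of $\Htau^{-1}$ at zeros of $\det\Htau$ via the adjugate, is the same as the paper's first step. Where you differ is in evaluating $\lim_{s\to0}s\Htau^{-1}(s)$.

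The paper expands $\Htau^{-1}(s)=\sum_i \bm{u}_i(s)\bm{v}_i(s)^T/\lambda_i(s)$. It isolates the eigenvalue $\lambda_1=s$ with eigenvector $\frac{1}{\sqrt{n}}\bm{1}_n$. It then argues that the other eigenvalues tend to the nonzero eigenvalues of $\Htau(0)=\Lgz$, so after multiplying by $s$ only the first term survives. This route is intuitive: one ``consensus mode'' with eigenvalue $s$, and all other modes bounded away from zero. But it needs an eigendecomposition of $\Htau(s)$ with controlled eigenvectors near the origin. That is available if one takes $s\to0$ along the real axis, where $\Htau(s)$ is real symmetric, but a complex symmetric matrix need not be diagonalizable in general, and the paper does not spell this out.

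You instead pin down the residue matrix $\bm{R}$ purely algebraically, from $\Lgz\bm{R}=\bm{R}\Lgz=\bm{0}$, $\ker\Lgz=\textup{span}\{\bm{1}_n\}$ and $\bm{R}\bm{1}_n=\bm{1}_n$. This uses only $\Htau\Htau^{-1}=\bm{I}_n$ and $\Htau(s)\bm{1}_n=s\bm{1}_n$, so it sidesteps diagonalizability altogether. Your remark that the retarded structure yields a uniform margin $\textup{Re}\,s<-\epsilon$ for the nonzero zeros is also something the paper passes over when it applies Lemma \ref{lem-FVT} to a non-rational transform.

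One correction to your side remarks: the convention implicit in \eqref{consensus-s} is zero pre-history, $\bm{x}(t)=\bm{0}$ for $t<0$, not constant history.
\begin{itemize}
\item Under constant history $\bm{x}(t)\equiv\bm{x}(0)$ on $[-\taumax,0]$, the transform of $x_j(t-\tau_{ij})-x_i(t-\tau_{ij})$ acquires the extra term $\frac{1-e^{-\tau_{ij}s}}{s}(x_j(0)-x_i(0))$.
\item Then \eqref{consensus-X} becomes $\bm{X}(s)=\Htau^{-1}(s)(\bm{x}(0)+\bm{g}(s))$, where $\bm{g}$ is entire and satisfies $\bm{1}_n^T\bm{g}(s)=0$ by the symmetry of $a_{ij}$ and $\tau_{ij}$.
\item Since your $\bm{R}=\frac{1}{n}\bm{1}_n\bm{1}_n^T$ annihilates $\bm{g}(0)$, the average-consensus conclusion still holds in that setting. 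The convention should nonetheless be stated correctly.
\end{itemize}
With zero pre-history the initial function jumps at $t=0$, so your justification via a ``continuous initial history'' does not apply literally. However, the solution is still continuous on $[0,\infty)$, which is all Lemma \ref{lem-FVT} requires.
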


\begin{IEEEproof}
   Using the property $\Htau^{-1}(s)=\frac{\textup{adj}(\Htau(s))}{\textup{det}(\Htau(s))}$, we can conclude that if $\Htau^{-1}(s)$ has a pole at $s=p_0$, then $\Htau(s)$ has a zero at $s=p_0$.
   Thus, if $\Htau(s)$ has at most a single zero at the origin and all the other zeros are in the open LHP, then every pole of $\Htau^{-1}(s)$ is either in the open LHP or at the origin, and at most one pole at the origin.
   By Lemma \ref{lem-FVT}, the dynamics of system \eqref{consensus} follow
   \begin{equation}\label{consensusFVT}
   \begin{split}
         \lim_{t\rightarrow\infty} \bm{x}(t)=\lim_{s\rightarrow0} s\Htau^{-1}(s)\bm{x}(0)=\lim_{s\rightarrow0} s\sum_{i=1}^n \frac{\bm{u}_i(s)\bm{v}_i(s)^T}{\lambda_i(s)}\bm{x}(0)
    \end{split}
    \end{equation}
    where $\lambda_i(s)$ denotes an eigenvalue of $\Htau(s)$ with the right eigenvector $\bm{u}_i(s)$ and left eigenvector $\bm{v}_i(s)$.
    It can be verified that $\Htau(s)$ always has an eigenvalue $\lambda_1=s$ with eigenvector $\bm{u}_1=\bm{v}_1=\frac{1}{\sqrt{n}}\bm{1}_n$.
    Further, since $\lim_{s\rightarrow 0}\Htau(s)=\Lgz$ contains a simple zero eigenvalue, we conclude that when $s\rightarrow0$, all the eigenvalues of $\Htau(s)$ other than $\lambda_1=s$ are nonzero.
    Thus, \eqref{consensusFVT} reduces to
    \begin{equation}
    \begin{split}
         \lim_{t\rightarrow\infty} \bm{x}(t)=\lim_{s\rightarrow0} s\frac{\bm{u}_1\bm{v}_1^T}{\lambda_1}\bm{x}(0)=\frac{1}{n}\bm{1}_n\bm{1}_n^T\bm{x}(0)
    \end{split}
    \end{equation}
    which implies an average consensus.
\end{IEEEproof}

\begin{lemma}\label{lem-Ltau}
    If $\Lgtau(s)$ is PSD with a simple zero eigenvalue for any $s\in\mS$, where $\mS$ is a subset in the open RHP, then $\Htau(s)$ does not have zeros in the set $\mS$.
    %If $\re{\Htau}(s)$ is positive definite for any $s\in\mS$, where $\mS$ is a set in the complex plane, then $\Htau(s)$ does not have zeros in the set $\mS$.
\end{lemma}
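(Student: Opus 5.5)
The proof will go through the Hermitian part of $\Htau(s)$ recorded in \eqref{LgtauHtau}. The idea is that a zero of $\Htau(s)$ would force a quadratic form to vanish, while positive definiteness of $\re{\Htau}(s)$ forbids this.

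Fix $s=r+\upj\omega\in\mS$, so $r>0$. By hypothesis $\Lgtau(s)$ is real symmetric and PSD, so all its eigenvalues are nonnegative. It follows that
\begin{equation*}
\re{\Htau}(s)=r\bm{I}_n+\Lgtau(s)
\end{equation*}
has every eigenvalue at least $r>0$, and hence is positive definite. The simple-zero-eigenvalue part of the hypothesis is not even needed here; PSD together with $r>0$ suffices.

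Suppose for contradiction that $\textup{det}(\Htau(s))=0$. Then there is a nonzero $\bm{x}\in\mbC^n$ with $\Htau(s)\bm{x}=\bm{0}$, and therefore $\bm{x}^*\Htau(s)\bm{x}=0$. Next, split this number into real and imaginary parts. Write $\Htau(s)=\re{\Htau}(s)+\upj\bm{K}(s)$ with $\bm{K}(s)=\frac{1}{2\upj}[\Htau(s)-\Htau^*(s)]$. Both $\re{\Htau}(s)$ and $\bm{K}(s)$ are Hermitian, so $\bm{x}^*\re{\Htau}(s)\bm{x}$ and $\bm{x}^*\bm{K}(s)\bm{x}$ are real. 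The real part of $\bm{x}^*\Htau(s)\bm{x}$ is therefore exactly $\bm{x}^*\re{\Htau}(s)\bm{x}$, and this is strictly positive because $\re{\Htau}(s)$ is positive definite and $\bm{x}\neq\bm{0}$. This contradicts $\bm{x}^*\Htau(s)\bm{x}=0$. Hence $\Htau(s)$ is nonsingular at every point of $\mS$, and by Definition \ref{def-pole} it has no zeros there.

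The only delicate point is the identity $\frac{1}{2}[\Htau(s)+\Htau^*(s)]=r\bm{I}_n+\Lgtau(s)$, which I would verify entrywise.
\begin{itemize}
\item Off-diagonal entries: since $a_{ij}=a_{ji}$ and $\tau_{ij}=\tau_{ji}$, the Hermitian part has entries $-a_{ij}\textup{Re}(e^{-\tau_{ij}s})=-a_{ij}e^{-\tau_{ij}r}\cos(\tau_{ij}\omega)$.
\item Diagonal entries: these are $r+\sum_j a_{ij}e^{-\tau_{ij}r}\cos(\tau_{ij}\omega)$.
\end{itemize}
Both match Definition \ref{def-sLap}, so no real obstacle is expected.
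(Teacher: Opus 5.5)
Your proof is correct and follows the paper's route. Both arguments show $\re{\Htau}(s)=r\bm{I}_n+\Lgtau(s)$ is positive definite for $r>0$ and then use the quadratic form to show $\bm{x}^*\Htau(s)\bm{x}$ has positive real part, via $\textup{Re}\{\lambda_i(s)\}=\bm{u}_i^*\re{\Htau}(s)\bm{u}_i>0$ for each eigenpair in the paper and via a null vector in yours. Your remark that only positive semi-definiteness of $\Lgtau(s)$, not simplicity of its zero eigenvalue, is needed is right, and your eigenvalue bound by $r$ gives definiteness over $\mbC^n$ directly, whereas the paper states it only for real $\bm{x}$.
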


\begin{IEEEproof}
   If $\Lgtau(s)$ is PSD with a simple zero eigenvalue for any $s\in\mS$, which implies that $\bm{x}^T\Lgtau(s)\bm{x}\geq0$, $\forall \bm{x}\in\mbR^n$ and $\bm{x}^T\Lgtau(s)\bm{x}=0$ only when $\bm{x}\in\textup{span}\{\bm{1}_n\}$.
   Then, we have $\bm{x}^T\re{\Htau}(s)\bm{x}=\bm{x}^T\Lgtau(s)\bm{x}+r\|\bm{x}\|^2 >0$ holds for any $s\in\mS$ and $\bm{x}\in\mbR^n$.
   So, $\re{\Htau}(s)$ is positive definite for any $s\in\mS$.

   Next, let $\lambda_i(s)$ denote an eigenvalue of $\Htau(s)$ with the normalized eigenvector $\bm{u}_i\in\mbC^n$, i.e., $\lambda_i(s)=\bm{u}_i^*\Htau(s)\bm{u}_i$.
   Since we already proved $\re{\Htau}(s)$ is positive definite for any $s\in\mS$, which means $\bm{u}_i^*\re{\Htau}(s)\bm{u}_i>0$, then it follows  $\textup{Re}\{\lambda_i(s)\}=\frac{1}{2}\bm{u}_i^*[\Htau(s)+\Htau^*(s)]\bm{u}_i=\bm{u}_i^*\re{\Htau}(s)\bm{u}_i>0$.
   Therefore, $\textup{det}(\Htau(s))\neq 0$ for any $s\in\mS$ and hence no zeros of $\Htau(s)$ exist in $\mS$.
\end{IEEEproof}

\begin{lemma}\label{lem-Ltau-imag}
    If $\Lgtau(\upj\omega)$ is PSD with a simple zero eigenvalue for any $\omega\in\mW$, then the set of points $\{\upj\omega|~ \omega\in\mW\backslash\{0\}\}$ are not zeros of $\Htau(s)$.
\end{lemma}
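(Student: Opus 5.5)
On the imaginary axis $r=0$, so \eqref{LgtauHtau} gives $\re{\Htau}(\upj\omega)=\Lgtau(\upj\omega)$. The argument of Lemma \ref{lem-Ltau}, which used $r>0$ to upgrade semi-definiteness to definiteness, therefore no longer applies. Instead, the plan is to split $\Htau(\upj\omega)$ into its Hermitian and skew-Hermitian parts and exploit that the skew-Hermitian part contains the term $\upj\omega\bm{I}_n$, whose contribution is nonzero exactly when $\omega\neq0$.

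Concretely, fix $\omega\in\mW\backslash\{0\}$ and suppose, for contradiction, that $\textup{det}(\Htau(\upj\omega))=0$. Then there is a unit vector $\bm{u}\in\mbC^n$ with $\Htau(\upj\omega)\bm{u}=\bm{0}$, so $\bm{u}^*\Htau(\upj\omega)\bm{u}=0$. Write $\Htau(\upj\omega)=\re{\Htau}(\upj\omega)+\upj\bm{K}(\omega)$, where
\[
\bm{K}(\omega)=\frac{1}{2\upj}\big[\Htau(\upj\omega)-\Htau^*(\upj\omega)\big]=\omega\bm{I}_n+\bm{M}(\omega).
\]
A direct computation from \eqref{sLap} shows that $\bm{M}(\omega)$ is the real symmetric Laplacian-type matrix with edge characterization $-a_{ij}\sin(\tau_{ij}\omega)$. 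In particular $\bm{M}(\omega)\bm{1}_n=\bm{0}$, and both $\re{\Htau}(\upj\omega)$ and $\bm{K}(\omega)$ are Hermitian. Since both quadratic forms are real, taking real and imaginary parts of $\bm{u}^*\Htau(\upj\omega)\bm{u}=0$ gives
\[
\bm{u}^*\Lgtau(\upj\omega)\bm{u}=0,\qquad \bm{u}^*\bm{K}(\omega)\bm{u}=0 .
\]
By hypothesis $\Lgtau(\upj\omega)$ is real symmetric PSD with simple zero eigenvalue, and its kernel is $\textup{span}\{\bm{1}_n\}$ because every Laplacian annihilates $\bm{1}_n$. For a PSD matrix, a vanishing quadratic form forces $\bm{u}$ into the kernel; this extends to complex $\bm{u}$ by splitting into real and imaginary parts. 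Hence $\bm{u}=c\bm{1}_n/\sqrt{n}$ with $|c|=1$. Substituting into the second identity gives $\bm{u}^*\bm{K}(\omega)\bm{u}=\omega+\frac{1}{n}\bm{1}_n^T\bm{M}(\omega)\bm{1}_n=\omega\neq0$, which is a contradiction. So $\upj\omega$ is not a zero of $\Htau(s)$.

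An alternative route to the same contradiction, which avoids computing $\bm{K}$ explicitly, is to note that $\Htau(s)\bm{1}_n=s\bm{1}_n$. Then $\bm{u}\propto\bm{1}_n$ would give $\Htau(\upj\omega)\bm{u}=\upj\omega\bm{u}\neq\bm{0}$, which contradicts $\Htau(\upj\omega)\bm{u}=\bm{0}$ directly. I would use this simpler version.

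The main obstacle is the step that pins $\bm{u}$ to $\textup{span}\{\bm{1}_n\}$. It requires the passage from $\bm{u}^*\Lgtau\bm{u}=0$ to $\Lgtau\bm{u}=\bm{0}$ for a complex vector, which holds because the square root of a PSD matrix gives $\|\Lgtau^{1/2}\bm{u}\|=0$. It also requires the identification of the simple zero eigenvector as $\bm{1}_n$. Once these are established, the exclusion of $\omega=0$ is exactly what makes the final contradiction work, consistent with Lemma \ref{lem-eigen}, which says the origin is a zero of $\Htau(s)$.
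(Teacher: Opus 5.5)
Your proof is correct and is essentially the paper's argument: the paper also uses that the real part of $\bm{u}^*\Htau(\upj\omega)\bm{u}$ equals $\bm{u}^*\Lgtau(\upj\omega)\bm{u}$, which vanishes only for $\bm{u}\in\textup{span}\{\bm{1}_n\}$, and it combines this with $\Htau(s)\bm{1}_n=s\bm{1}_n$. The paper concludes directly that every eigenvalue of $\Htau(\upj\omega)$ is either $\upj\omega$ or has positive real part, whereas you argue by contradiction from a null vector, and your justification of the complex-vector kernel step makes the same idea slightly more careful (the $\bm{K}(\omega)$ imaginary-part check is a valid but unnecessary extra).
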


\begin{IEEEproof}
   Let $\lambda_i(\upj\omega)$ denote an eigenvalue of $\Htau(\upj\omega)$ with the normalized eigenvector $\bm{u}_i\in\mbC^n$, i.e., $\lambda_i(\upj\omega)=\bm{u}_i^*\Htau(\upj\omega)\bm{u}_i$.
   If $\Lgtau(\upj\omega)=\re{\Htau}(\upj\omega)$ is PSD with only one zero eigenvalue for any $\omega\in\mW$, then the eigenvector associated with the zero eigenvalue must be $\bm{1}_n$.
   It implies that $\bm{x}^*\re{\Htau}(\upj\omega)\bm{x}\geq0$ and $\bm{x}^*\re{\Htau}(\upj\omega)\bm{x}=0$ only when $\bm{x}\in\textup{span}\{\bm{1}_n\}$.
   Thus, for any $\omega\in\mW$ we have $\textup{Re}\{\lambda_i(\upj\omega)\}=\frac{1}{2}\bm{u}_i^*[\Htau(\upj\omega)+\Htau^*(\upj\omega)]\bm{u}_i=\bm{u}_i^*\re{\Htau}(\upj\omega)\bm{u}_i>0$ if $\bm{u}_i\notin\textup{span}\{\bm{1}_n\}$, and $\textup{Re}\{\lambda_i(\upj\omega)\}=0$ if $\bm{u}_i\in\textup{span}\{\bm{1}_n\}$.
   On the other hand, it can be easily verified that $\Htau(\upj\omega)$ has an eigenvalue $\upj\omega$ with eigenvector~$\bm{1}_n$, and hence the eigenvectors of all the other eigenvalues of $\Htau(\upj\omega)$ are not in $\textup{span}\{\bm{1}_n\}$. So we conclude that $\Htau(\upj\omega)$ has an eigenvalue $\upj\omega$ and all the other eigenvalues $\lambda_i(\upj\omega)$ have positive real parts for any $\omega\in\mW$.
   Therefore, we have $\textup{det}(\Htau(\upj\omega))\neq0$ for any $\omega\in\mW\backslash\{0\}$, i.e., the zeros of $\Htau(\upj\omega)$ are not in $\mW\backslash\{0\}$.
\end{IEEEproof}

\indent
With the above lemmas, we now arrive at a neat statement on the system consensus in terms of the frequency-domain signed Laplacian.

\begin{theorem}\label{thm-Ltau}
     Given the delay-embedded graph $\mG(\mV,\mE,\bm{A},\bm{\tau})$, system \eqref{consensus} reaches an average consensus if the frequency-domain signed Laplacian $\Lgtau(s)$ is PSD with a simple zero eigenvalue for any $s$ in the RHP.
\end{theorem}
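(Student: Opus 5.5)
The plan is to reduce the statement to Lemma~\ref{lem-zeropole}. We need to show two things. First, $\Htau(s)$ has a single zero at the origin. Second, every other zero of $\Htau(s)$ lies in the open LHP. The first point is exactly Lemma~\ref{lem-eigen}, which needs only Assumption~\ref{assump1}. So the work is to exclude zeros from the closed RHP minus the origin. Here ``RHP'' in the hypothesis is read as the closed right half plane $\{s=r+\upj\omega:\ r\geq0\}$, and we split it into the open RHP and the imaginary axis.

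\textbf{Open RHP.} We take $\mS=\{s:\ \textup{Re}\{s\}>0\}$. By hypothesis, $\Lgtau(s)$ is PSD with a simple zero eigenvalue for every $s\in\mS$. Lemma~\ref{lem-Ltau} then gives directly that $\Htau(s)$ has no zeros in $\mS$. The mechanism is that $\re{\Htau}(s)=r\bm{I}_n+\Lgtau(s)$ is positive definite there, so every eigenvalue of $\Htau(s)$ has positive real part.

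\textbf{Imaginary axis.} We take $\mW=\mbR$. The hypothesis at $s=\upj\omega$ says that $\Lgtau(\upj\omega)=\re{\Htau}(\upj\omega)$ is PSD with a simple zero eigenvalue. Lemma~\ref{lem-Ltau-imag} then shows that no point $\upj\omega$ with $\omega\neq0$ is a zero of $\Htau(s)$. This leaves the origin as the only zero in the closed RHP. By Lemma~\ref{lem-eigen} it is simple; note that $\Lgtau(0)=\Lgz$, which is consistent with Assumption~\ref{assump1}. Consequently every other zero of $\Htau(s)$ is in the open LHP. Lemma~\ref{lem-zeropole} then yields $\lim_{t\to\infty}\bm{x}(t)=\frac1n\bm{1}_n\bm{1}_n^T\bm{x}(0)$, i.e.\ an average consensus.

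\textbf{Main obstacle.} The delicate step is the bridge from ``zeros of $\Htau$'' to ``poles of $\Htau^{-1}$'' used inside Lemma~\ref{lem-zeropole}, together with the applicability of the final value theorem (Lemma~\ref{lem-FVT}) for a transcendental (delay) transfer matrix. In particular, the final value theorem needs poles not merely in the open LHP but bounded away from the imaginary axis, or some equivalent growth control. Since the paper already packages this in Lemma~\ref{lem-zeropole}, I would simply invoke it. If more care were needed, I would argue as follows. $\Htau(s)$ is retarded-type: its diagonal contains $s$ plus bounded exponentials. Hence its zeros in any right sector are bounded, and the zeros near the imaginary axis cannot accumulate there, because for $|\omega|$ large the term $s$ dominates the bounded entries. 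This confines the finitely many relevant zeros to a compact set, where the two lemmas above exclude them.
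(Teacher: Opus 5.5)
Your proposal is correct and is essentially the paper's own proof, which likewise chains Lemmas~\ref{lem-eigen}--\ref{lem-Ltau-imag}: it takes $\mS$ to be the open RHP and $\mW=\mbR$, reads ``RHP'' as the closed half plane, and concludes through Lemma~\ref{lem-zeropole}. Your added remark is a sound supplement that the paper leaves implicit: for this retarded-type $\Htau(s)$, zeros cannot accumulate near the imaginary axis, which is what makes the final value theorem applicable.
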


\begin{IEEEproof}
    It is a direct result from Lemmas~\ref{lem-eigen}-\ref{lem-Ltau-imag} by setting $\mS$ as the open RHP and $\mW=\mbR$.
\end{IEEEproof}

\indent
Theorem \ref{thm-Ltau} characterizes the consensusability under heterogeneous delays in terms of $\Lgtau(s)$ which is defined over the network topology and delay profile at all kinds of frequencies.
It implies that $\Lgtau(s)$ serves as a ``network-oriented'' transfer matrix that plays a similar role to $\Htau(s)$ in checking consensus.
By Theorem \ref{thm-Ltau}, the consensus problem is transformed from checking the frequency response of $\Htau(s)$ to checking the spectrum of $\Lgtau(s)$ which facilitates the application of graph theoretic tools.

\indent
Despite its theoretical insights, Theorem \ref{thm-Ltau} does not have high practicality as it is impossible to check $\Lgtau(s)$ at every point $s$ in the RHP.
In the next section we will construct another type of signed Laplacian matrix which is independent of $s$ and reflects the impact of delays in a more explicit manner.

\begin{remark}
We will define totally three types of signed Laplacians in this paper.
In addition to $\bm{L}_{\tau}(s)$, the following sections will further introduce $\tiLgtau$ (see Definition \ref{def-tauLap}) and $\tiLgz$ (see \eqref{sLap}), which are rooted from $\bm{L}_{\tau}(s)$.
$\tiLgtau$ captures the worst case of $\bm{L}_{\tau}(s)$ for $s$ in the RHP, and $\tiLgz$ is a further simplification of $\tiLgtau$.
All of them will be useful to characterize consensusability in the respective situations.
\end{remark}

\section{Frequency-independent signed Laplacian}\label{secdelay}
\subsection{Analysis based on delay-embedded signed Laplacian}
Let us define the following subset in the RHP
\begin{equation}
\begin{split}
     \mR= \{s=r+\upj\omega|~0\leq r \leq \dmax,~|\omega| \leq \dmax \}
\end{split}
\end{equation}
where, for simplicity of result presentation, we define
\begin{equation}
\begin{split}
     \dmax  &\triangleq \lambda_{\max}(\Lgz^\textup{abs})
\end{split}
\end{equation}
and $\Lgz^\textup{abs}$ denotes the Laplacian matrix obtained by setting $w_{ij}=|a_{ij}|$, $\forall i\neq j$. Note that $\Lgz^\textup{abs}=\Lgz$ if $a_{ij}\geq 0$, $\forall i\neq j$.
The following analysis shows that the RHP points outside the set $\mR$ are not of interest to consensus analysis.

\begin{lemma}\label{lem-mono}
    Consider a Laplacian matrix $\Lg=[L_{ij}]\in\mbR^{n\times n}$ defined by $L_{ii}=\sum_{j=1,j\neq i}^n w_{ij}$ and $L_{ij}=L_{ji}=-w_{ij}$, $\forall i\neq j$.
    Each eigenvalue of $\Lg$ monotonically increases with the value of $w_{ij}$ of each edge.
\end{lemma}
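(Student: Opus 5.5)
The plan is to write $\Lg$ as a sum of rank-one edge terms and apply Weyl's monotonicity theorem for Hermitian matrices. Let $\bm{e}_i$ denote the $i$-th standard basis vector of $\mbR^n$ and set $\bm{b}_{ij}=\bm{e}_i-\bm{e}_j$. Directly from the definition of $L_{ii}$ and $L_{ij}$ we have
\begin{equation*}
\Lg=\sum\nolimits_{(i,j)\in\mE} w_{ij}\,\bm{b}_{ij}\bm{b}_{ij}^T .
\end{equation*}
This is easy to check entrywise. The term for edge $(i,j)$ contributes $w_{ij}$ to the diagonal positions $(i,i)$ and $(j,j)$, and $-w_{ij}$ to the off-diagonal positions $(i,j)$ and $(j,i)$. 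Summing over the edges incident to $i$ then recovers $L_{ii}=\sum_{j\neq i}w_{ij}$. The identity holds for arbitrary real $w_{ij}$, including negative ones, because it is purely algebraic.

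Now fix an edge $(p,q)\in\mE$ and increase its weight from $w_{pq}$ to $w_{pq}+\delta$ with $\delta>0$, leaving all other weights unchanged. The new Laplacian is
\begin{equation*}
\Lg'=\Lg+\delta\,\bm{b}_{pq}\bm{b}_{pq}^T .
\end{equation*}
The perturbation $\delta\bm{b}_{pq}\bm{b}_{pq}^T$ is real symmetric and PSD. Order the eigenvalues of each matrix nondecreasingly as $\lambda_1\leq\cdots\leq\lambda_n$. The Courant--Fischer min-max characterization gives
\begin{equation*}
\lambda_k(\Lg')=\min_{\dim U=k}\ \max_{\bm{x}\in U,\|\bm{x}\|=1}\Big(\bm{x}^T\Lg\bm{x}+\delta(x_p-x_q)^2\Big)\ \geq\ \lambda_k(\Lg)
\end{equation*}
for every $k$. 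This is exactly Weyl's inequality for a PSD perturbation. Hence each ordered eigenvalue of $\Lg$ is nondecreasing in $w_{pq}$. Applying the argument edge by edge, or to a PSD perturbation $\sum\delta_{ij}\bm{b}_{ij}\bm{b}_{ij}^T$ with all $\delta_{ij}\geq 0$, gives monotonicity with respect to simultaneous increases of several weights.

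The only point needing care is interpreting ``each eigenvalue''. Eigenvalues are not individually labeled, so the statement should be read in terms of the ordered eigenvalues $\lambda_k$, which is what Courant--Fischer controls. I would state this explicitly. I would also note that the monotonicity is non-strict in general: $\bm{1}_n$ always lies in the kernel of $\bm{b}_{pq}^T$, so the zero eigenvalue associated with $\bm{1}_n$ never moves. In the intended uses, $\lambda_{\max}$ and the PSD property under weight changes, this non-strict version suffices. The main obstacle is therefore bookkeeping rather than substance, and I expect no real difficulty.
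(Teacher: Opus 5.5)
Your proof is correct, but it takes a different route from the paper. The paper argues by first-order eigenvalue sensitivity. For a normalized eigenvector $\bm{u}_k$ of $\Lg$ it uses $\partial\lambda_k/\partial w_{ij}=\bm{u}_k^T(\bm{e}_i-\bm{e}_j)(\bm{e}_i-\bm{e}_j)^T\bm{u}_k=(u_{ki}-u_{kj})^2\geq 0$.

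\textbf{What the paper's route buys and costs.} The formula gives an explicit rate of change. An eigenvalue is insensitive to edge $(i,j)$ exactly when its eigenvector agrees at $i$ and $j$, which is why the eigenvalue attached to $\bm{1}_n$ never moves. As written, however, the formula is only valid where $\lambda_k$ is simple. Turning a nonnegative derivative into monotonicity over a finite change of $w_{ij}$ also requires integrating along the path and handling eigenvalue crossings. That needs either analytic perturbation theory or a.e.\ differentiability of the ordered eigenvalues, and the paper leaves it implicit.

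\textbf{What your route buys.} The rank-one decomposition plus Courant--Fischer avoids all of this. It compares ordered eigenvalues directly for finite increments and does not care about multiplicities. Its Loewner-order form $\Lg(w')-\Lg(w)=\sum(w'_{ij}-w_{ij})\bm{b}_{ij}\bm{b}_{ij}^T\succeq 0$ gives in one step what the paper later needs. That is the bound $-\dmax\leq\lambda_k(\Lgtau(s))\leq\dmax$ in Lemma~\ref{lem-regionR}. It is also $\lambda_2(\Lgtau(s))\geq\lambda_2(\tiLgtau)>0$ for $s\in\mR$ under the hypothesis of Theorem~\ref{thm-tiLtau}.

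Your reading of ``each eigenvalue'' as the ordered eigenvalues, with non-strict monotonicity, is the right way to make the statement precise.
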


\begin{IEEEproof}
    Let $\lambda_k$ denote a nonzero eigenvalue of a general-sense Laplacian $\bm{L}$ with normalized eigenvector $\bm{u}_k=[u_{ki}]\in\mbR^n$.
    By eigen-sensitivity analysis \cite{petersen2008matrix}, we have $\frac{\partial \lambda_k}{\partial w_{ij}} = (u_{ki}-u_{kj})^2\geq 0$, which completes the proof.
\end{IEEEproof}

\begin{lemma}\label{lem-nonsingular}
    If at least one of the real symmetric matrices $\bm{A},\bm{B}\in\mbR^{n\times n}$ is positive definite or negative definite, then $\bm{A}+\upj\bm{B}$ is nonsingular.
\end{lemma}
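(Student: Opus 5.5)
The plan is to argue by contradiction using a quadratic form on a putative null vector. Suppose that $\bm{A}+\upj\bm{B}$ is singular. Then there is a nonzero vector $\bm{x}\in\mbC^n$ with $(\bm{A}+\upj\bm{B})\bm{x}=\bm{0}$. We normalize it so that $\bm{x}^*\bm{x}=1$, and then pre-multiply by $\bm{x}^*$ to obtain the scalar identity
\begin{equation*}
    \bm{x}^*\bm{A}\bm{x} + \upj\,\bm{x}^*\bm{B}\bm{x} = 0 .
\end{equation*}

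The key observation is that both quadratic forms are real numbers. Since $\bm{A}$ and $\bm{B}$ are real symmetric, they are Hermitian, so $\overline{\bm{x}^*\bm{A}\bm{x}}=\bm{x}^*\bm{A}^*\bm{x}=\bm{x}^*\bm{A}\bm{x}$, and the same holds for $\bm{B}$. The displayed identity therefore splits into its real part and its imaginary part, and each must vanish separately:
\begin{equation*}
    \bm{x}^*\bm{A}\bm{x}=0, \qquad \bm{x}^*\bm{B}\bm{x}=0 .
\end{equation*}

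Next we use definiteness. A real symmetric positive definite matrix also satisfies $\bm{x}^*\bm{A}\bm{x}>0$ for every nonzero complex $\bm{x}$. To see this, write $\bm{x}=\bm{p}+\upj\bm{q}$ with $\bm{p},\bm{q}\in\mbR^n$. Because $\bm{A}$ is symmetric, the cross terms cancel and
\begin{equation*}
    \bm{x}^*\bm{A}\bm{x}=\bm{p}^T\bm{A}\bm{p}+\bm{q}^T\bm{A}\bm{q},
\end{equation*}
which is strictly positive since $\bm{p}$ and $\bm{q}$ are not both zero. The same argument with the inequality reversed covers the negative definite case, and it applies equally to $\bm{B}$.

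Hence, whichever of $\bm{A}$ or $\bm{B}$ is definite has a nonzero quadratic form at $\bm{x}$. This contradicts the two vanishing conditions above, so $\bm{A}+\upj\bm{B}$ must be nonsingular. The only step requiring some care is extending definiteness from real vectors to complex vectors, and the decomposition $\bm{x}=\bm{p}+\upj\bm{q}$ handles it directly. The rest of the argument is routine.
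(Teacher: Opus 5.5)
Your proof is correct and follows the same route as the paper: take a null vector $\bm{x}$, note that $\bm{x}^*\bm{A}\bm{x}$ and $\bm{x}^*\bm{B}\bm{x}$ are both real and so must both vanish, and contradict the definiteness. Your decomposition $\bm{x}=\bm{p}+\upj\bm{q}$ spells out a step the paper uses without comment, namely that definiteness of a real symmetric matrix carries over to complex vectors.
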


\begin{IEEEproof}
    Let us prove it by contradiction.
    Suppose $\bm{A}+\upj\bm{B}$ is singular, then there exists a non-zero vector $\bm{x}$ such that $(\bm{A}+\upj\bm{B})\bm{x}=\bm{0}$, which further leads to
    \begin{equation}\label{nonsingular}
    \begin{split}
        \bm{x}^*\bm{A}\bm{x}=-\upj \bm{x}^*\bm{B}\bm{x}.
    \end{split}
    \end{equation}
    Since $\bm{x}^*\bm{A}\bm{x}$ and $\bm{x}^*\bm{B}\bm{x}$ both take real values, \eqref{nonsingular} enforces that $\bm{x}^*\bm{A}\bm{x}=\bm{x}^*\bm{B}\bm{x}=0$.
    It implies that $\bm{A}$ and $\bm{B}$ are not positive definite or negative definite, which yields a contradiction.
\end{IEEEproof}

\begin{lemma}\label{lem-regionR}
    A RHP point $s=r+\upj\omega$ can be a zero of $\Htau(s)$ only if $s\in\mR$.
\end{lemma}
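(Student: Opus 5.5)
The plan is to show the contrapositive: if $s=r+\upj\omega$ lies in the RHP ($r\geq 0$) but outside $\mR$, i.e.\ $r>\dmax$ or $|\omega|>\dmax$, then $\Htau(s)$ is nonsingular. To do this I will split $\Htau(s)$ into real and imaginary parts as $\Htau(s)=\bm{A}_s+\upj\bm{B}_s$, with both $\bm{A}_s$ and $\bm{B}_s$ real symmetric. Then I will apply Lemma~\ref{lem-nonsingular}, after showing that one of the two parts is definite.

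From \eqref{sLap}, $e^{-\tau_{ij}s}=e^{-\tau_{ij}r}(\cos(\tau_{ij}\omega)-\upj\sin(\tau_{ij}\omega))$. This gives the two parts explicitly:
\begin{itemize}
\item the real part is $\bm{A}_s=\re{\Htau}(s)=r\bm{I}_n+\Lgtau(s)$, as in \eqref{LgtauHtau};
\item the imaginary part is $\bm{B}_s=\omega\bm{I}_n+\Lgtauim(s)$.
\end{itemize}
Here $\Lgtauim(s)$ is the (signed) Laplacian with edge weights $w_{ij}=-a_{ij}e^{-\tau_{ij}r}\sin(\tau_{ij}\omega)$. Its diagonal entries are $-\sum_j a_{ij}e^{-\tau_{ij}r}\sin(\tau_{ij}\omega)$ and its off-diagonal entries are $a_{ij}e^{-\tau_{ij}r}\sin(\tau_{ij}\omega)$, which is consistent with the Laplacian form. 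Both matrices are real symmetric because $a_{ij}=a_{ji}$ and $\tau_{ij}=\tau_{ji}$.

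The key step is a spectral bound: every eigenvalue of $\Lgtau(s)$ and of $\Lgtauim(s)$ lies in $[-\dmax,\dmax]$ whenever $r\geq0$. For any Laplacian with weights $w_{ij}$ and any $\bm{x}\in\mbR^n$,
\begin{equation*}
\bm{x}^T\Lg(w_{ij})\bm{x}=\sum\nolimits_{(i,j)\in\mE} w_{ij}(x_i-x_j)^2 .
\end{equation*}
In both cases $r\geq 0$ gives $|w_{ij}|\leq |a_{ij}|e^{-\tau_{ij}r}\leq|a_{ij}|$. Hence
\begin{equation*}
|\bm{x}^T\Lg(w_{ij})\bm{x}|\leq \sum\nolimits_{(i,j)\in\mE}|a_{ij}|(x_i-x_j)^2=\bm{x}^T\Lgz^\textup{abs}\bm{x}\leq \dmax\|\bm{x}\|^2 .
\end{equation*}
The Rayleigh quotient then yields the claimed bound. (Alternatively, one could argue via the monotonicity of Lemma~\ref{lem-mono}, comparing weights $w_{ij}$ against $\pm|a_{ij}|$. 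The quadratic-form argument is cleaner, since it handles the sign changes directly.)

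The proof then concludes by cases.
\begin{itemize}
\item If $r>\dmax$, then $\bm{A}_s=r\bm{I}_n+\Lgtau(s)\succeq (r-\dmax)\bm{I}_n\succ0$, so $\bm{A}_s$ is positive definite.
\item If $\omega>\dmax$, then $\bm{B}_s\succeq(\omega-\dmax)\bm{I}_n\succ 0$, so $\bm{B}_s$ is positive definite.
\item If $\omega<-\dmax$, then $\bm{B}_s\preceq(\omega+\dmax)\bm{I}_n\prec0$, so $\bm{B}_s$ is negative definite.
\end{itemize}
In each case Lemma~\ref{lem-nonsingular} gives $\textup{det}(\Htau(s))\neq0$, so $s$ is not a zero of $\Htau(s)$. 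Therefore any RHP zero must satisfy $0\leq r\leq\dmax$ and $|\omega|\leq\dmax$, i.e.\ $s\in\mR$.

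I expect the main obstacle to be the spectral bound. There one must carefully identify the imaginary part as a genuine Laplacian-form matrix and handle signed, possibly negative, weights. This is exactly why the bound uses $\Lgz^\textup{abs}$ rather than $\Lgz$. Everything else is bookkeeping.
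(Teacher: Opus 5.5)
Your proof is correct and follows the paper's argument. You split $\Htau(s)$ into $r\bm{I}_n+\Lgtau(s)$ plus $\upj$ times a shifted signed Laplacian, confine both spectra to $[-\dmax,\dmax]$ using $|w_{ij}|\leq|a_{ij}|$ for $r\geq0$, and conclude with Lemma~\ref{lem-nonsingular} in the three cases $r>\dmax$, $\omega>\dmax$, $\omega<-\dmax$.

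The only differences are cosmetic. Your $\Lgtauim(s)$ is the negative of the paper's, which writes $\omega\bm{I}_n-\Lgtauim(s)$. You also obtain the spectral bound from the direct quadratic-form sandwich $-\Lgz^\textup{abs}\preceq\Lg(w_{ij})\preceq\Lgz^\textup{abs}$ instead of citing Lemma~\ref{lem-mono}. That is a self-contained and slightly more rigorous justification of the same inequality.
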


\begin{IEEEproof}
    It suffices to show that $\Htau(s)$ is nonsingular when $s$ is a RHP point locating outside $\mR$.
    First, $\Htau(s)$ can be expressed as
    \begin{equation}
    \begin{split}
        \Htau(s)=r\bm{I}_n + \Lgtau(s) + \upj(\omega\bm{I}_n -  \Lgtauim(s))
    \end{split}
    \end{equation}
    where $\Lgtauim(s)=[L_{\tau,ij}^{\textup{im}}(s)]\in\mbR^{n\times n}$ is a Laplacian matrix defined by $L_{\tau,ij}^{\textup{im}}(s)=-a_{ij}e^{-\tau_{ij}r}\sin(\tau_{ij}\omega)$, $i\neq j$ and $L_{\tau,ii}^{\textup{im}}(s)=-\sum_{j=1,j\neq i}^n L_{\tau,ij}^{\textup{im}}$.
    Thus, the characterization of each edge $(i,j)$ in $\Lgtau(s)$ and $\Lgtauim(s)$ is bounded between $-|a_{ij}|$ and $|a_{ij}|$.
    If $s$ locates outside $\mR$ (i.e., $r> \dmax$ or $|\omega|> \dmax$), by Lemma~\ref{lem-mono} we have $-\dmax\leq\lambda_k(\Lgtau(s))\leq \dmax$ and $-\dmax\leq \lambda_k(\Lgtauim(s))\leq \dmax$, $k=1,...,n$.
    Then, it is trivial to conclude that $r\bm{I}_n + \Lgtau(s)$ is positive definite if $r> \dmax$,
    $\omega\bm{I}_n -  \Lgtauim(s)$ is positive definite if $\omega> \dmax$, and $\omega\bm{I}_n -  \Lgtauim(s)$ is negative definite if $\omega < -\dmax$.
    Let $\bm{A}=r\bm{I}_n + \Lgtau(s)$ and $\bm{B}=\omega\bm{I}_n -  \Lgtauim(s)$.
    By Lemma \ref{lem-nonsingular} we have $\Htau(s)=\bm{A}+\upj \bm{B}$ is nonsingular in case of $r> \dmax$ or $|\omega|> \dmax$.
    Therefore, $s$ cannot be a zero of $\Htau(s)$ at a RHP point locating outside $\mR$.
\end{IEEEproof}

\begin{corollary}\label{cor-Ltau}
    Given the delay-embedded graph $\mG(\mV,\mE,\bm{A},\bm{\tau})$, system \eqref{consensus} reaches an average consensus if $\Lgtau(s)$ is PSD with a simple zero eigenvalue for any $s\in\mR$.
\end{corollary}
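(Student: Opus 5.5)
The plan is to reduce the corollary to Lemma~\ref{lem-zeropole}. That is, we show that $\Htau(s)$ has exactly one zero at the origin and no other zeros in the closed RHP. The zeros in the closed RHP split into three groups: the origin, nonzero points on the imaginary axis, and points in the open RHP. Each group is handled by an earlier lemma, with $\mR$ replacing the whole RHP used in Theorem~\ref{thm-Ltau}.

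\textbf{Step 1 (the origin).} By Lemma~\ref{lem-eigen}, $\Htau(s)$ has a single zero at $s=0$. This uses only Assumption~\ref{assump1}.

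\textbf{Step 2 (open RHP).} Let $s$ be a point of the open RHP. If $s\notin\mR$, Lemma~\ref{lem-regionR} already shows that $s$ is not a zero of $\Htau(s)$. If $s\in\mR$ with $r>0$, apply Lemma~\ref{lem-Ltau} with $\mS=\mR\cap\{r>0\}$. This set lies in the open RHP, and by hypothesis $\Lgtau(s)$ is PSD with a simple zero eigenvalue on it, so $\Htau(s)$ has no zeros there.

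\textbf{Step 3 (imaginary axis).} Take $s=\upj\omega$ with $\omega\neq0$. If $|\omega|\le\dmax$, then $\upj\omega\in\mR$ because $r=0$ is admitted in the definition of $\mR$. Lemma~\ref{lem-Ltau-imag} with $\mW=[-\dmax,\dmax]$ then shows that $\upj\omega$ is not a zero. If $|\omega|>\dmax$, Lemma~\ref{lem-regionR} does not literally cover this case, since it is stated for RHP points, but its proof does. With $\bm{B}=\omega\bm{I}_n-\Lgtauim(\upj\omega)$, Lemma~\ref{lem-mono} makes $\bm{B}$ positive or negative definite, so Lemma~\ref{lem-nonsingular} shows $\Htau(\upj\omega)$ is nonsingular. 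I will state this explicitly, or note that the argument of Lemma~\ref{lem-regionR} is valid for $r\ge0$.

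Combining the three steps, every zero of $\Htau(s)$ other than the simple zero at the origin lies in the open LHP, and Lemma~\ref{lem-zeropole} gives average consensus. The main obstacle is bookkeeping rather than new analysis. One must check that the closed boundary $r=0$ is included in $\mR$, and that the lemmas' hypotheses (open versus closed half-plane, exclusion of $\omega=0$) match exactly, so that no boundary point of the RHP escapes coverage. The point $\omega=0$ is the origin, already treated in Step 1.
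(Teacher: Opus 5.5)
Your proof is correct and follows the paper's route. The paper simply cites Theorem~\ref{thm-Ltau} and Lemma~\ref{lem-regionR}, and your three steps unpack that citation by rerunning Lemmas~\ref{lem-eigen}--\ref{lem-Ltau-imag} with $\mS=\mR\cap\{r>0\}$ and $\mW=[-\dmax,\dmax]$, then using Lemma~\ref{lem-regionR} for the remaining closed-RHP points. Your caveat about the imaginary axis is harmless but unnecessary: the paper's ``RHP'' means the closed half-plane ($\mR$ itself is called a subset of the RHP and contains $r=0$), so Lemma~\ref{lem-regionR} already covers the points with $r=0$ and $|\omega|>\dmax$.
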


\begin{IEEEproof}
    It follows from Theorem \ref{thm-Ltau} and Lemma \ref{lem-regionR}.
\end{IEEEproof}

\indent
By Corollary \ref{cor-Ltau}, the region to check the spectrum of $\Lgtau(s)$ is reduced from the whole RHP to the set $\mR$.
Although $\mR$ still contain an infinite number of points, the boundedness of $\mR$ enables us to figure out the worst case and further reduce the number of checkpoints from infinite to finite.
Based on this idea, we construct the following signed Laplacian matrix that captures the worst case in $\mR$ and hence get rids of the frequency domain.

\begin{definition}[Delay-embedded signed Laplacian]\label{def-tauLap}
    Given a graph $\mG(\mV,\mE, \bm{A}, \bm{\tau})$, characterize each edge $(i,j)\in\mE$ by the following rules
    \begin{subequations}\label{tauLap}
    \begin{align}
         &w_{ij} = a_{ij},~\textup{if}~a_{ij}>0,~\tau_{ij}=0 \label{tauLap1}\\
         &w_{ij} = a_{ij}e^{-\tau_{ij}\dmax}\cos(\tau_{ij}\dmax),~\textup{if}~a_{ij}>0,~0< \tau_{ij}\dmax < \frac{\pi}{2} \label{tauLap2}\\
         &w_{ij} = a_{ij}\cos(\tau_{ij}\dmax),~\textup{if}~a_{ij}>0,~\frac{\pi}{2} \leq \tau_{ij}\dmax < \pi \label{tauLap3}\\
         &w_{ij} = -a_{ij},~\textup{if}~a_{ij}>0,~\tau_{ij}\dmax\geq \pi \label{tauLap4}\\
         &w_{ij} = a_{ij},~\textup{if}~a_{ij}<0 \label{tauLap5}
    \end{align}
    \end{subequations}
     and the delay-embedded signed Laplacian $\tiLgtau=[\widehat{L}_{\tau,ij}]\in\mbR^{n\times n}$ is defined by $\widehat{L}_{\tau,ii}=\sum_{j=1,j\neq i}^n w_{ij}$ and $\widehat{L}_{\tau,ij}=\widehat{L}_{\tau,ji}=-w_{ij}$, $i\neq j$.
\end{definition}

\begin{theorem}\label{thm-tiLtau}
    Given the delay-embedded graph $\mG(\mV,\mE,\bm{A},\bm{\tau})$, system \eqref{consensus} reaches an average consensus if the delay-embedded signed Laplacian $\tiLgtau$ is PSD with a simple zero eigenvalue.
\end{theorem}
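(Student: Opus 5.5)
The natural route is to reduce Theorem~\ref{thm-tiLtau} to Corollary~\ref{cor-Ltau}. It suffices to show that for every $s=r+\upj\omega\in\mR$ the frequency-domain signed Laplacian $\Lgtau(s)$ dominates $\tiLgtau$ edgewise, i.e. $w_{ij}(s)=a_{ij}e^{-\tau_{ij}r}\cos(\tau_{ij}\omega)\geq w_{ij}$ for every edge, where $w_{ij}$ is given by \eqref{tauLap}.

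Once this edgewise domination holds, write $\Lgtau(s)=\tiLgtau+\bm{L}(\delta_{ij})$ with $\delta_{ij}=w_{ij}(s)-w_{ij}\geq0$. The matrix $\bm{L}(\delta_{ij})$ is a Laplacian with nonnegative weights, hence PSD with $\bm{1}_n$ in its kernel. Therefore $\bm{x}^T\Lgtau(s)\bm{x}\geq \bm{x}^T\tiLgtau\bm{x}\geq 0$, with equality forcing $\bm{x}\in\textup{span}\{\bm{1}_n\}$. So $\Lgtau(s)$ is PSD with a simple zero eigenvalue on $\mR$, and Corollary~\ref{cor-Ltau} yields average consensus. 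This is the same monotonicity fact as Lemma~\ref{lem-mono}, but I would use the quadratic form directly, since it handles the simple-zero property cleanly.

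The main work is the edgewise bound, which I would verify case by case over $0\leq r\leq\dmax$ and $|\omega|\leq\dmax$, using that $\cos$ is even so only $0\leq\omega\leq\dmax$ matters.
\begin{enumerate}
\item Case \eqref{tauLap1}: the delay is zero, so $w_{ij}(s)=a_{ij}$ and equality holds.
\item Case \eqref{tauLap5}: $a_{ij}<0$. Here $w_{ij}(s)=a_{ij}e^{-\tau_{ij}r}\cos(\tau_{ij}\omega)\geq -|a_{ij}|\cdot 1=a_{ij}$.
\item Case \eqref{tauLap2}: $\tau_{ij}\omega\leq\tau_{ij}\dmax<\pi/2$, so $\cos(\tau_{ij}\omega)\geq\cos(\tau_{ij}\dmax)>0$ and $e^{-\tau_{ij}r}\geq e^{-\tau_{ij}\dmax}$. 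Multiplying these nonnegative factors gives the bound.
\item Case \eqref{tauLap3}: if $\cos(\tau_{ij}\omega)\geq0$ the product is nonnegative and already dominates $a_{ij}\cos(\tau_{ij}\dmax)\leq 0$. If $\cos(\tau_{ij}\omega)<0$, then $\tau_{ij}\omega\in(\pi/2,\tau_{ij}\dmax]\subset(\pi/2,\pi)$, where $\cos$ is decreasing, so $\cos(\tau_{ij}\omega)\geq\cos(\tau_{ij}\dmax)$. Multiplying the negative cosine by $e^{-\tau_{ij}r}\leq1$ only increases it, giving $\geq a_{ij}\cos(\tau_{ij}\dmax)$.
\item Case \eqref{tauLap4}: $w_{ij}(s)\geq -a_{ij}$ trivially.
\end{enumerate}

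The step I expect to need the most care is Case \eqref{tauLap3}. There the two factors pull in opposite directions: for a negative cosine the worst case is $r=0$, not $r=\dmax$. This is exactly why the definition drops the exponential in that regime, and checking the sign split correctly is the crux. Beyond that, I must ensure that Corollary~\ref{cor-Ltau} indeed covers the imaginary-axis points, which it does since $r=0$ is included in $\mR$ via Theorem~\ref{thm-Ltau} and Lemma~\ref{lem-Ltau-imag}.
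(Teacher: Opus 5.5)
Your proposal is correct and follows the paper's proof: both reduce to Corollary~\ref{cor-Ltau} by checking, with the same case split over $\mR$, that each edge weight $a_{ij}e^{-\tau_{ij}r}\cos(\tau_{ij}\omega)$ is bounded below by the weight $w_{ij}$ in \eqref{tauLap}. The paper identifies that bound as the exact minimum and invokes the eigenvalue-sensitivity Lemma~\ref{lem-mono}, whereas you use the decomposition $\Lgtau(s)=\tiLgtau+\bm{L}(\delta_{ij})$ and compare quadratic forms, which is slightly more rigorous and establishes the simple-zero-eigenvalue property explicitly.
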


\begin{IEEEproof}
    By Lemma \ref{lem-mono}, if we construct a constant Laplacian matrix, say $\tiLgtau$, where $w_{ij}$ takes the minimum value of $a_{ij}e^{-\tau_{ij}r}\cos(\tau_{ij}\omega)$ in the set $s\in\mR$, then it suffices to guarantee the positive semi-definiteness of $\Lgtau(s)$ by the positive semi-definiteness of $\tiLgtau$.
    In the following let us explain why $w_{ij}$ is defined by \eqref{tauLap}.

    Case 1: for edge $(i,j)$ with $a_{ij}>0$ and $\tau_{ij}=0$, we have $w_{ij}=\min_{s\in\mR} a_{ij}e^{-\tau_{ij}r}\cos(\tau_{ij}\omega)=a_{ij}$, leading to \eqref{tauLap1}.

    Case 2: for edge $(i,j)$ with $a_{ij}>0$ and $0< \tau_{ij}\dmax < \frac{\pi}{2}$, it follows that $|\tau_{ij}\omega|\leq \tau_{ij}\dmax<\frac{\pi}{2}$. So we have $w_{ij}=\min_{s\in\mR} a_{ij}e^{-\tau_{ij}r}\cos(\tau_{ij}\omega)=a_{ij}e^{-\tau_{ij}\dmax}\cos(\tau_{ij}\dmax)>0$, which is achieved at $r=\omega=\dmax$ and leads to \eqref{tauLap2}.

    Case 3: for edge $(i,j)$ with $a_{ij}>0$ and $\frac{\pi}{2} \leq \tau_{ij}\dmax < \pi$, it follows that $|\tau_{ij}\omega|\leq \tau_{ij}\dmax<\pi$. So we have $w_{ij}=\min_{s\in\mR} a_{ij}e^{-\tau_{ij}r}\cos(\tau_{ij}\omega)=a_{ij}\cos(\tau_{ij}\dmax)\leq 0$, which is achieved at $r=0,\omega=\dmax$ and leads to \eqref{tauLap3}.

    Case 4: for edge $(i,j)$ with $a_{ij}>0$ and $\tau_{ij}\dmax\geq \pi$, it follows that $|\tau_{ij}\omega|$ can take value greater than or equal to $\pi$. So we have $w_{ij}=\min_{s\in\mR} a_{ij}e^{-\tau_{ij}r}\cos(\tau_{ij}\omega)=-a_{ij}< 0$, which is achieved at $r=0,\omega=\pi/\tau_{ij}$ and leads to \eqref{tauLap4}.

    Case 5: for edge $(i,j)$ with $a_{ij}<0$, we have $w_{ij}=\min_{s\in\mR} a_{ij}e^{-\tau_{ij}r}\cos(\tau_{ij}\omega)=a_{ij}<0$, which is achieved at $r=\omega=0$ and leads to \eqref{tauLap5}.

    Constructing $\tiLgtau$ by the above rules, we conclude that if $\tiLgtau$ is PSD with a simple zero eigenvalue, then $\Lgtau(s)$ is PSD with a simple zero eigenvalue for any $s\in\mR$, and hence \eqref{consensus} reaches an average consensus by Corollary \ref{cor-Ltau}.
\end{IEEEproof}

Theorem \ref{thm-tiLtau} leads to the following classification of delays, from which we have important observations on how the delays impact consensusability:
\begin{enumerate}
      \item A zero-delay positive-weight link $(i,j)\in\mE$ makes a positive contribution to consensus as the corresponding $w_{ij}=a_{ij}>0$ (see \eqref{tauLap1}) induces a cooperative interaction between agent $i$ and agent $j$.
      \item A small-delay positive-weight link (i.e., $\tau_{ij}\dmax < \frac{\pi}{2}$) still induces a cooperative interaction, which will not cause the loss of consensusability but may deteriorate the system performance.
          By \eqref{tauLap2}, we have $0<w_{ij}<a_{ij}$, which means the link still makes a positive contribution to consensus but the contribution is weaker than the zero-delay case.
      \item A (possibly unbounded) large-delay positive-weight link (i.e., $\tau_{ij}\dmax \geq \frac{\pi}{2}$) flips the actual interaction from cooperative to antagonistic. It gives rise to the ``large-delay-induced antagonism'' that can cause a loss of consensusability.
          This can be seen from \eqref{tauLap3} and \eqref{tauLap4} where we have $w_{ij}<0$ that makes a negative contribution to consensus.
      \item A negative-weight link $(i,j)\in\mE$, regardless of its delay, makes a negative contribution to consensus as $w_{ij}=a_{ij}<0$ (see \eqref{tauLap5}) induces an antagonistic interaction.
\end{enumerate}

\begin{remark}
   Consider a link with an infinite delay $\tau_{ij}\to \infty$, which is unrealistic but mathematically notable.
   In this extreme case, agents $i$ and $j$ have no actual interaction in any finite time horizon, and hence link $(i,j)$ is practically absent ($w_{ij}=0$).
   On the other hand, the rule in \eqref{tauLap4} gives a conservative assessment by assigning $w_{ij}=-a_{ij}$. This is due to that $w_{ij}$ captures the worst-case value of $e^{-\tau_{ij}r}\cos(\tau_{ij}\omega)$ and
   $w_{ij}= \min_{s\in\mR}a_{ij}e^{-\tau_{ij}r}\cos(\tau_{ij}\omega)=-a_{ij}$ holds true for any finite delay $\tau_{ij}$.
   Although Definition \ref{def-tauLap} is compatible with infinite delays, the major focus here is to analyze consensusability under large finite delays (that make traditional consensus criteria invalid).
\end{remark}

\begin{remark}
   It has been reported that an intentionally designed large-delay link may have positive effect on consensusability in some special cases \cite{ma2023intentional}.
   As a limitation of Theorem \ref{thm-tiLtau}, it is not applicable to the design of intentional delays.
   The essence of Theorem \ref{thm-tiLtau} is to check the system robustness to the worst-case effect in frequency domain caused by the delayed links, which misses to catch the potential benefit from intentional delays.
\end{remark}

\begin{remark}
   For the delayed consensus protocol \eqref{consensus} over a directed graph with asymmetric edge weights and delays, we can still use \eqref{LgtauHtau} to define $\Lgtau(s)= \frac{1}{2}(\Htau(s)+\Htau^*(s))-r\bm{I}$, which is a complex-valued Hermitian matrix.
   In this case, Theorem \ref{thm-Ltau} still holds since the quadratic form analysis in the proof is valid for Hermitian matrices.
   However, $\bm{L}_{\tau}(s)$ refers to the Laplacian matrix with complex-valued $w_{ij}$, which is beyond the scope of signed graphs that are designed for real-valued $w_{ij}$.
   Meanwhile, the construction of $\tiLgtau$, i.e., a worst-case characterization of $\bm{L}_{\tau}(s)$ for $s\in\mR$, needs to be reconsidered.
   Consequently, Theorem \ref{thm-tiLtau} is not directly applicable to directed graphs.
   An extension to directed graphs will be a future direction.
\end{remark}

\indent
Based on Theorem \ref{thm-tiLtau}, we derive some intuitive corollaries as follows.

\begin{corollary}\label{cor-taumax}
    Given the delay-embedded graph $\mG(\mV,\mE,\bm{A},\bm{\tau})$ with $a_{ij}\geq 0$, $\forall i\neq j$, system \eqref{consensus} reaches an average consensus if $\taumax < \pi/2\dmax$, where $\taumax=\max_{(i,j)\in\mE}\tau_{ij}$ denotes the maximum delay among all the links in the network.
\end{corollary}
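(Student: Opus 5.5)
The plan is to deduce Corollary~\ref{cor-taumax} from Theorem~\ref{thm-tiLtau} by showing that, under the stated hypotheses, every edge of the delay-embedded signed Laplacian $\tiLgtau$ falls into one of the first two rules of Definition~\ref{def-tauLap}. Then $\tiLgtau$ is an ordinary Laplacian with positive weights on a connected graph.

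\textbf{Step 1.} Since $a_{ij}\geq 0$ for all $i\neq j$, we have $\Lgz^\textup{abs}=\Lgz$, so $\dmax=\lambda_{\max}(\Lgz)$. Every edge $(i,j)\in\mE$ then has $a_{ij}>0$, because edges with $a_{ij}=0$ contribute nothing and may be discarded. By assumption, $\tau_{ij}\leq\taumax<\pi/2\dmax$, hence $0\leq\tau_{ij}\dmax<\pi/2$. Only rules \eqref{tauLap1} and \eqref{tauLap2} therefore apply, giving $w_{ij}=a_{ij}$ or $w_{ij}=a_{ij}e^{-\tau_{ij}\dmax}\cos(\tau_{ij}\dmax)$. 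In both cases $w_{ij}>0$, since $e^{-\tau_{ij}\dmax}>0$ and $\cos(\tau_{ij}\dmax)>0$ on $[0,\pi/2)$.

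\textbf{Step 2.} We show that $\tiLgtau$ is PSD with a simple zero eigenvalue. Because all $w_{ij}\geq 0$, the quadratic form $\bm{x}^T\tiLgtau\bm{x}=\sum_{(i,j)\in\mE}w_{ij}(x_i-x_j)^2$ is nonnegative. This form vanishes exactly when $x_i=x_j$ on every edge with $w_{ij}>0$. The edges with $w_{ij}>0$ are precisely those with $a_{ij}>0$, so the graph supporting $\tiLgtau$ coincides with the support graph of $\Lgz$. That graph is connected, as follows either from the connectedness assumed for $\mG$ or from Assumption~\ref{assump1}, because the simple zero eigenvalue of a nonnegative-weight Laplacian is equivalent to connectivity. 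Hence the kernel of $\tiLgtau$ is $\textup{span}\{\bm{1}_n\}$.

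\textbf{Step 3.} Applying Theorem~\ref{thm-tiLtau} then yields average consensus.

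Two minor points need care. The first is confirming that positivity of the weights is preserved, which hinges on the strict inequality $\tau_{ij}\dmax<\pi/2$ so that the cosine factor never vanishes. The second is noting that the support graph is unchanged, so that simplicity of the zero eigenvalue transfers from $\Lgz$ to $\tiLgtau$. Alternatively, Lemma~\ref{lem-mono} could be used to compare $\tiLgtau$ with $c\Lgz$ for $c=\min w_{ij}/a_{ij}>0$, since $\tiLgtau\succeq c\Lgz$ gives the same conclusion directly.
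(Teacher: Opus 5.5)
Your proposal is correct and follows essentially the same route as the paper. In both, $a_{ij}\geq 0$ and $\taumax<\pi/2\dmax$ force every edge under rule \eqref{tauLap1} or \eqref{tauLap2}, so $\tiLgtau$ is a positively weighted Laplacian of a connected graph, hence PSD with a simple zero eigenvalue, and Theorem~\ref{thm-tiLtau} concludes; your extra details ($\dmax=\lambda_{\max}(\Lgz)$ here, positivity of the cosine factor, and connectivity inherited via Assumption~\ref{assump1}) simply make explicit what the paper's one-line proof leaves implicit.
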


\begin{IEEEproof}
    If $\taumax < \pi/2\dmax$, then the Laplacian matrix $\tiLgtau$ describes a connected graph with $w_{ij}>0$, $\forall (i,j)\in\mE$. Thus, we have $\tiLgtau$ is PSD with a simple zero eigenvalue and the proof is done with Theorem~\ref{thm-tiLtau}.
\end{IEEEproof}

\indent
The statement of Corollary~\ref{cor-taumax} is equivalent to [\citen{bliman2008average}, Theorem 5] and reduces to [\citen{olfati2004consensus}, Theorem 10] under identical delays.
So Theorem \ref{thm-tiLtau} includes those classical results as a special case.
Moreover, the proof of Corollary~\ref{cor-taumax} provides a new interpretation for consensus under small delays, i.e., those small-delay links do not introduce antagonistic interactions into the system.

\begin{corollary}\label{cor-tau-ind}
    Given the delay-embedded graph $\mG(\mV,\mE,\bm{A},\bm{\tau})$, characterize each edge $(i,j)\in\mE$ by
    \begin{equation}\label{sLap}
    \begin{split}
         w_{ij} =
         \left\{
           \begin{array}{ll}
             a_{ij}, &a_{ij}>0,~\tau_{ij}=0 \\
             -a_{ij}, &a_{ij}>0,~\tau_{ij}>0\\
             a_{ij}, &a_{ij}<0
           \end{array}
         \right.
    \end{split}
    \end{equation}
    and define the delay-independent signed Laplacian $\tiLgz$ by $\widehat{L}_{0,ii}=\sum_{j=1,j\neq i}^n w_{ij}$ and $\widehat{L}_{0,ij}=\widehat{L}_{\tau,ji}=-w_{ij}$, $i\neq j$.
    If $\tiLgz$ is PSD with a simple zero eigenvalue, then system \eqref{consensus} reaches an average consensus regardless of the values of those nonzero delays $\tau_{ij}>0$.
\end{corollary}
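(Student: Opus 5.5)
The plan is to reduce Corollary~\ref{cor-tau-ind} to Theorem~\ref{thm-tiLtau} through the monotonicity property of Lemma~\ref{lem-mono}. Compare the edge characterizations of $\tiLgz$ and $\tiLgtau$ one edge at a time. If every edge weight in $\tiLgz$ is no larger than the corresponding weight in $\tiLgtau$, then each eigenvalue of $\tiLgz$ lies below the matching eigenvalue of $\tiLgtau$ when the eigenvalues of both are sorted in increasing order.

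Concretely, denote the weights of $\tiLgtau$ by $w_{ij}^{\tau}$, as given in \eqref{tauLap}, and the weights of $\tiLgz$ by $w_{ij}^{0}$. There are three cases.
\begin{itemize}
\item For $a_{ij}>0$ with $\tau_{ij}=0$, and for $a_{ij}<0$, the two weights coincide. This matches rules \eqref{tauLap1} and \eqref{tauLap5}.
\item For $a_{ij}>0$ with $\tau_{ij}>0$, we have $w_{ij}^{0}=-a_{ij}$. Each of the rules \eqref{tauLap2}--\eqref{tauLap4} yields a value of the form $a_{ij}e^{-\tau_{ij}r}\cos(\tau_{ij}\omega)$ with $r\ge 0$, so it is bounded below by $-a_{ij}$. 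Hence $w_{ij}^{0}\le w_{ij}^{\tau}$ on every edge.
\end{itemize}

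Next, interpolate linearly along the path $w_{ij}(\theta)=w_{ij}^{0}+\theta(w_{ij}^{\tau}-w_{ij}^{0})$ for $\theta\in[0,1]$. Lemma~\ref{lem-mono} states that every eigenvalue is nondecreasing along this path. This step is cleaner in Courant--Fischer form: the quadratic form $\bm{x}^T\bm{L}\bm{x}=\sum_{(i,j)\in\mE}w_{ij}(x_i-x_j)^2$ is pointwise monotone in the weights, so $\bm{x}^T\tiLgtau\bm{x}\ge\bm{x}^T\tiLgz\bm{x}$ for all $\bm{x}$, which gives the ordered-eigenvalue inequality directly.

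Now assume $\tiLgz$ is PSD with a simple zero eigenvalue. Both matrices annihilate $\bm{1}_n$. For any $\bm{x}\perp\bm{1}_n$ with $\bm{x}\neq\bm 0$, the hypothesis gives $\bm{x}^T\tiLgz\bm{x}>0$, and therefore $\bm{x}^T\tiLgtau\bm{x}>0$. Since $\tiLgtau\bm{1}_n=\bm 0$ and $\tiLgtau$ is symmetric, the space $\textup{span}\{\bm{1}_n\}^\perp$ is invariant under it, so $\tiLgtau$ is positive definite on that complement. It follows that $\tiLgtau$ is PSD with a simple zero eigenvalue. Theorem~\ref{thm-tiLtau} then yields average consensus.

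The conclusion holds regardless of the values of the positive delays, because $\tiLgz$ depends only on the signs of $a_{ij}$ and on whether $\tau_{ij}=0$. In particular, it does not depend on $\dmax$ or on the magnitudes of the $\tau_{ij}$.

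The only point needing care is the comparison $-a_{ij}\le w_{ij}^{\tau}$ in case \eqref{tauLap3}, where $w_{ij}^{\tau}=a_{ij}\cos(\tau_{ij}\dmax)\ge -a_{ij}$. This follows immediately from $\cos\ge -1$. I would also state the monotonicity via the quadratic form rather than the derivative formula in Lemma~\ref{lem-mono}, to avoid issues with repeated eigenvalues.
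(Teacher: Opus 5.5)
Your proposal is correct and follows the same route as the paper. The paper notes that $\tiLgz$ is the worst case of $\tiLgtau$, so $\tiLgtau-\tiLgz$ is PSD (your edgewise comparison $w_{ij}^{0}\le w_{ij}^{\tau}$ read through the quadratic form), concludes that $\tiLgtau$ is PSD with a simple zero eigenvalue, and applies Theorem~\ref{thm-tiLtau}; you additionally write out the case check and the kernel argument that the paper calls ``easily verified.''
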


\begin{IEEEproof}
    It can be easily verified that $\tiLgz$ is ``worst of the worst case'' of $\tiLgtau$ and $\tiLgtau - \tiLgz$ is PSD, and hence $\tiLgtau$ is PSD with a simple zero eigenvalue if $\tiLgz$ is PSD with a simple zero eigenvalue.
    Thus, \eqref{consensus} reaches an average consensus by Theorem \ref{thm-tiLtau}.
\end{IEEEproof}

\indent
Corollary~\ref{cor-tau-ind} provides a delay-independent condition for network consensus. It shows that the system can endure even unbounded delays if the network topology alone is sufficiently strong to ``neutralize'' the worst possible antagonistic effects caused by those large-delay links.
In addition, $\tiLgz$ being indefinite does not necessarily mean a failure of consensus since Corollary~\ref{cor-tau-ind} is a sufficient condition.

\subsection{Analysis based on effective resistance}
The effective resistance is an important graph theoretic concept originated from electrical networks.
Consider an electrical network described by the Laplacian matrix $\bm{L}(w_{ij})$, i.e., it has the same topology as the underlying graph and the resistance of each electric line $(i,j)$ is equal to $w_{ij}^{-1}$.
In this network, suppose node $i$ connects a source with a unit current injection, node $j$ connects a source with a unit current ejection and the other nodes keep open circuit.
Then, the effective resistance between node $i$ and $j$ equals the voltage difference between node $i$ and $j$ \cite{klein1993resistance}, which characterizes the overall coupling between a pair of nodes (see Fig. \ref{fig-reff} for illustration).
In our previous work \cite{song2019extension}, an extended definition of effective resistance is proposed to characterize the overall coupling between two sets of nodes in a network.
It is revealed that the sign of effective resistance is closely linked to the definiteness of Laplacian matrices \cite{chen2016characterizing,zelazo2017robustness,song2019extension}.
Thus, using the effective resistance notion over the delay-embedded signed Laplacian, this subsection further investigates the impact of heterogeneous delays in terms of the coupling between agents.

\begin{figure}[!h]
  \centering
  \includegraphics[width=3.5in]{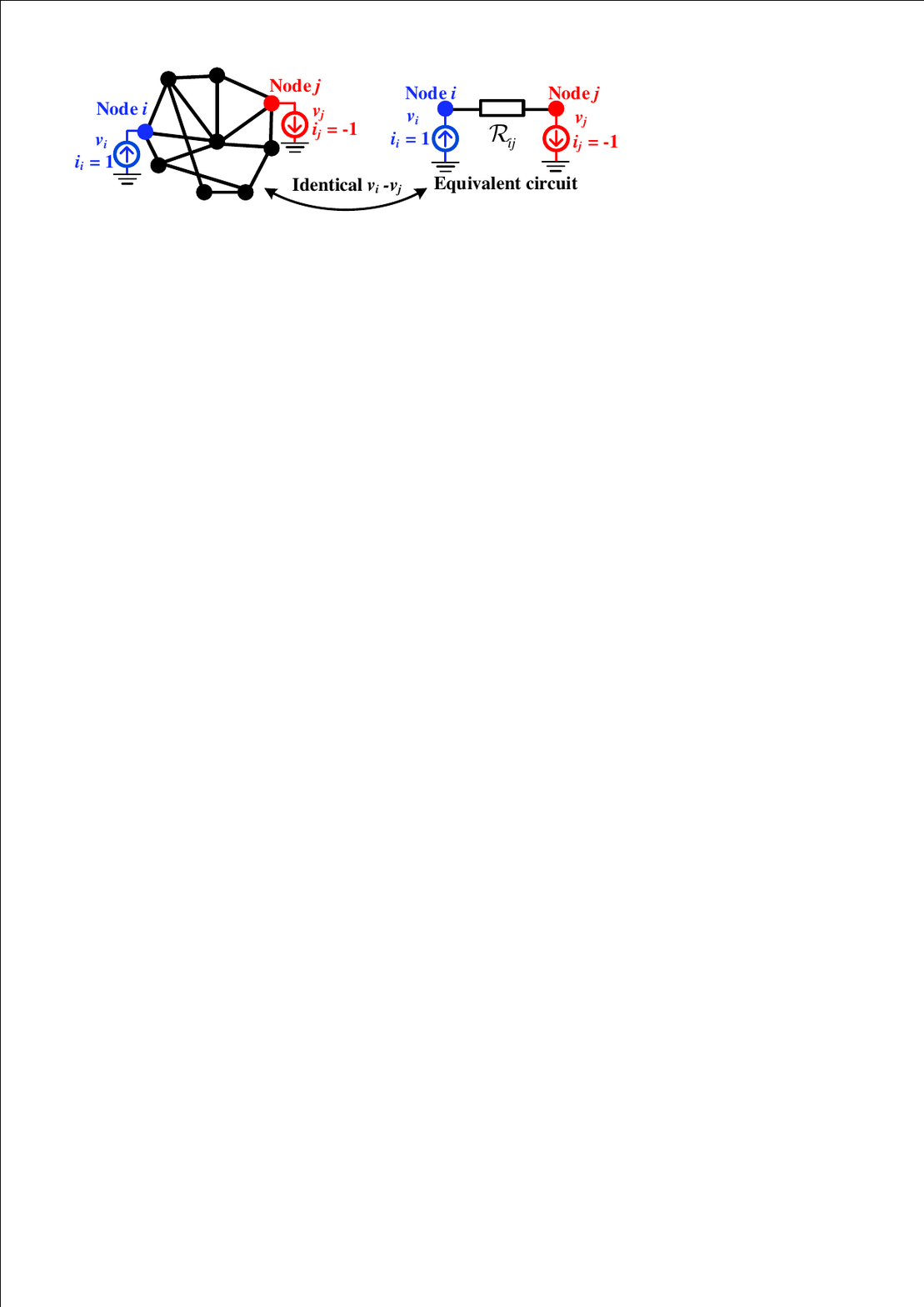}
  \caption{Circuit interpretation of effective resistance.}
  \label{fig-reff}
\end{figure}

\indent
Let us introduce some additional notations for the convenience of presenting our results.
Given two non-empty disjoint sets of nodes in a graph, say $\mV_a, \mV_b\subset \mV$, and let $\mV_c=\mV\backslash (\mV_a\cup\mV_b)$ be the set of remaining nodes. For simplicity, denote $a=|\mV_a|$, $b=|\mV_b|$ and $c=|\mV_c|$.
Then the Laplacian matrix $\Lg$ can be partitioned as
\begin{equation}
\begin{split}
   \Lg=
   \begin{bmatrix}
     \bm{L}_{aa}   &   \bm{L}_{ab}   &   \bm{L}_{ac}     \\
     \bm{L}_{ab}^T   &   \bm{L}_{bb}   &   \bm{L}_{bc}    \\
      \bm{L}_{ac}^T  &   \bm{L}_{bc}^T   &   \bm{L}_{cc}    \\
   \end{bmatrix}
\end{split}
\end{equation}
and the extended effective resistance is defined below.

\begin{definition}[Extended effective resistance \cite{song2019extension}]\label{defreff}
   Let $\mV_a, \mV_b\subset \mV$ be two non-empty disjoint sets of nodes in the graph $\mG(\mV,\mE,\bm{A})$, and $\mV_c=\mV\backslash (\mV_a\cup\mV_b)$ be the set of remaining nodes.
   Let $\Lg$ be a Laplacian matrix defined over $\mG$ with a certain characterization of edges.
   The extended effective resistance between $\mV_a$ and $\mV_b$ is defined as
   \begin{equation}
   \begin{split}
      \reff(\Lg, \mV_a, \mV_b)&=[\bm{e}_{\mV_a}^T(\Lg/ \bm{L}_{cc})\bm{e}_{\mV_a}]^{-1} \label{equreff}
   \end{split}
   \end{equation}
   where the Schur complement takes the form
   \begin{equation}
   \begin{split}
      \Lg/ \bm{L}_{cc} =
   \begin{bmatrix}
     \bm{L}_{aa}   &   \bm{L}_{ab}      \\
     \bm{L}_{ab}^T   &   \bm{L}_{bb}       \\
   \end{bmatrix}-
   \begin{bmatrix}
    \bm{L}_{ac}     \\
    \bm{L}_{bc}    \\
   \end{bmatrix}
   \bm{L}_{cc}^{-1}
   \begin{bmatrix}
     \bm{L}_{bc}^T   &   \bm{L}_{cc}    \\
   \end{bmatrix}
   \end{split}
   \end{equation}
   and $\bm{e}_{\mV_a}\in\mbR^{a+b}$ denotes a vector with those entries indexed by the set $\mV_a$ being one and the other entries being zero.
\end{definition}

\indent
It is proved that \cite{song2019extension}
\begin{equation}\label{reffcompat}
\begin{split}
   \reff(\Lg, \{i\}, \{j\}) = (\bm{e}_i-\bm{e}_j)^T\Lg^{\dag}(\bm{e}_i-\bm{e}_j)
\end{split}
\end{equation}
where $\bm{e}_i\in\mbR^n$ is a vector with its $i$-th entry being one and all the other entries being zero, and $\Lg^{\dag}$ denotes the Moore-Penrose inverse of $\Lg$.
The right hand side of \eqref{reffcompat} is the formula for the conventional effective resistance between node $i$ and $j$, i.e., the extended definition is downward compatible.

\indent
Now we arrive at the following results based on the extended effective resistance.

\begin{theorem}\label{thm-reff}
    Given the delay-embedded graph $\mG(\mV,\mE,\bm{A},\bm{\tau})$, system \eqref{consensus} reaches an average consensus if $\reff(\tiLgtau, \mV_a, \mV_b)> 0$ for any two non-empty disjoint sets $\mV_a, \mV_b\subset \mV$.
\end{theorem}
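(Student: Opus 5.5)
The plan is to reduce the statement to Theorem~\ref{thm-tiLtau}. It suffices to show that positivity of $\reff(\tiLgtau,\mV_a,\mV_b)$ for all non-empty disjoint pairs $\mV_a,\mV_b$ forces $\tiLgtau$ to be PSD with a simple zero eigenvalue. I would proceed by induction on the size of the reduced (Schur-complemented) network, using the fact that Kron reduction of a Laplacian is again a Laplacian. For a signed graph this holds whenever the eliminated block is nonsingular, since $\bm{1}_n$ stays in the kernel.

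The first step treats the two-set case $\mV_c=\emptyset$, i.e.\ $\mV_b=\mV\backslash\mV_a$. Here no Schur complement is taken, and $\reff=[\bm{e}_{\mV_a}^T\tiLgtau\bm{e}_{\mV_a}]^{-1}$. Its positivity says that the total weight of the cut between $\mV_a$ and its complement is positive. More generally, the definition yields the variational identity
\[
\bm{e}_{\mV_a}^T(\tiLgtau/\widehat{\bm{L}}_{\tau,cc})\bm{e}_{\mV_a}=\min_{\bm{y}}\,\bm{x}^T\tiLgtau\bm{x},
\]
where $\bm{x}$ equals $1$ on $\mV_a$, $0$ on $\mV_b$ and $\bm{y}$ on $\mV_c$. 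I would use this identity only after checking that $\widehat{\bm{L}}_{\tau,cc}$ is positive definite; for an indefinite block it is merely a stationary value. So the key intermediate claim is that every principal submatrix $\widehat{\bm{L}}_{\tau,cc}$ with $c\le n-2$ is positive definite.

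I would prove that claim by induction on $c$, choosing first to eliminate nodes one at a time. Suppose $\widehat{\bm{L}}_{\tau,cc}$ is positive definite and we add a node $k$. Its new pivot equals the diagonal entry of the Kron-reduced Laplacian at $k$. That entry is $\bm{e}_k^T(\tiLgtau/\widehat{\bm{L}}_{\tau,cc})\bm{e}_k=\reff(\tiLgtau,\{k\},\mV\backslash(\mV_c\cup\{k\}))^{-1}$, which is positive by hypothesis. This needs $\mV\backslash(\mV_c\cup\{k\})$ non-empty, which holds since $c+1\le n-1$. Sylvester's criterion, applied via Schur pivots, then gives positive definiteness of every principal submatrix of order up to $n-1$.

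Finally, $\tiLgtau$ has $\bm{1}_n$ in its kernel and a positive definite principal submatrix of order $n-1$. By Cauchy interlacing it therefore has at most one nonpositive eigenvalue, and that eigenvalue is the zero eigenvalue belonging to $\bm{1}_n$. Hence $\tiLgtau$ is PSD with a simple zero eigenvalue, and Theorem~\ref{thm-tiLtau} yields average consensus.

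The main obstacle is the inductive pivot step. I must verify that the diagonal entry of the Schur complement indexed by a singleton $\{k\}$ is exactly the extended effective resistance with $\mV_a=\{k\}$, for a suitable $\mV_b$. This requires $\bm{e}_{\mV_a}$ to select only $k$ and the Schur complement to keep the remaining nodes as $\mV_b$. If needed, I would fall back on the result from \cite{song2019extension} linking the sign of extended effective resistance to the definiteness of $\tiLgtau$, and invoke it directly.
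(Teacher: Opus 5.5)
Your argument is correct, but it follows a different route from the paper. The paper proves Theorem~\ref{thm-reff} in one line. It cites Corollary~2 of \cite{song2019extension}, an if-and-only-if characterization ($\tiLgtau$ is PSD with a simple zero eigenvalue exactly when every extended effective resistance is positive), and then applies Theorem~\ref{thm-tiLtau}.

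You instead prove the needed direction from first principles:
\begin{enumerate}
\item The case $\mV_c=\emptyset$, $\mV_a=\{k\}$ gives positive diagonal entries.
\item For $k\notin\mV_c$, the $(k,k)$ entry of $\tiLgtau/\widehat{\bm{L}}_{\tau,cc}$ is exactly the pivot $\widehat{L}_{\tau,kk}-\widehat{\bm{L}}_{\tau,kc}\widehat{\bm{L}}_{\tau,cc}^{-1}\widehat{\bm{L}}_{\tau,ck}$ of the bordered principal submatrix. It equals $1/\reff(\tiLgtau,\{k\},\mV\backslash(\mV_c\cup\{k\}))>0$, so the block congruence propagates positive definiteness up to order $n-1$.
\item Cauchy interlacing together with $\tiLgtau\bm{1}_n=\bm{0}$ then gives PSD with a simple zero eigenvalue.
\end{enumerate}

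A few parts of your write-up can be simplified:
\begin{itemize}
\item The step you call the main obstacle is immediate from Definition~\ref{defreff}: with $\mV_a=\{k\}$, the vector $\bm{e}_{\mV_a}$ simply selects the $(k,k)$ entry.
\item The induction only invokes $\reff$ for index sets whose block $\widehat{\bm{L}}_{\tau,cc}$ has already been shown positive definite, so well-definedness of the Schur complements is automatic.
\item The variational identity and the Kron-reduction remarks are never used and can be dropped.
\end{itemize}

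What your route buys is a self-contained, elementary proof that also shows how much slack the hypothesis has. Only singleton $\mV_a$ are used. In fact a single nested chain already suffices: $\reff(\tiLgtau,\{k\},\{k+1,\dots,n\})>0$ for $k=1,\dots,n-1$, with $\mV_c=\{1,\dots,k-1\}$. That is $n-1$ checks instead of exponentially many. What the citation buys is brevity and the converse, namely that positivity of all extended effective resistances is also necessary for $\tiLgtau$ to be PSD with a simple zero eigenvalue. Your argument does not give that converse, but Theorem~\ref{thm-reff} does not need it.
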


\begin{IEEEproof}
    By [\citen{song2019extension},~Corollary~2], $\tiLgtau$ is PSD and $\tiLgtau$ is PSD with a simple zero eigenvalue if and only if $\reff(\tiLgtau, \mV_a, \mV_b)> 0$ holds for any two non-empty disjoint sets $\mV_a, \mV_b$. Then, applying Theorem~\ref{thm-tiLtau} guarantees that \eqref{consensus} reaches an average consensus.
\end{IEEEproof}

\indent
A positive effective resistance implies the normal situation with passivity and cooperative coupling between the agents, while a negative one implies the loss of passivity and overall antagonistic coupling between the agents.
Thus, Theorem \ref{thm-reff} has the following interpretation.
If the cooperative interactions (from zero-delay or small-delay links) prevail over those antagonistic interactions (from large-delay links) so that the overall coupling between any two sets of agents still present cooperativeness, then the consensusability is preserved.
Also note that Theorem \ref{thm-reff} still holds true when the signed Laplacian $\tiLgtau$ in its statement is replaced by $\tiLgz$, but it leads to a more conservative assessment of the impact of delays.

\begin{corollary}\label{cor-reff}
    Given the delay-embedded graph $\mG(\mV,\mE,\bm{A},\bm{\tau})$.
    Assume there is an edge $(k,l)\in\mE$ with $\tau_{kl}\dmax > \frac{\pi}{2}$, and let $\tiLgtau^{(k,l)}$ denote the delay-embedded Laplacian after deleting edge $(k,l)$.
    Suppose $\tiLgtau^{(k,l)}$ is PSD with a simple zero eigenvalue, then \eqref{consensus} reaches an average consensus if $\reff(\tiLgtau, \{k\}, \{l\})>0$, or equivalently, the following inequality holds
   \begin{subequations}\label{singlenegative}
   \begin{align}
      &-a_{kl}\cos(\tau_{kl}\dmax) < 1/\reff(\tiLgtau^{(k,l)},\{k\},\{l\}),\textup{if}~\frac{\pi}{2} < \tau_{kl}\dmax < \pi \\
      &a_{kl}< 1/\reff(\tiLgtau^{(k,l)},\{k\},\{l\}),\textup{if}~\tau_{kl}\dmax \geq \pi.
   \end{align}
   \end{subequations}
\end{corollary}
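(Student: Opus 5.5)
Our plan is to reduce the statement to Theorem \ref{thm-tiLtau} through a rank-one update argument. We write $\tiLgtau = \tiLgtau^{(k,l)} + w_{kl}(\bm{e}_k-\bm{e}_l)(\bm{e}_k-\bm{e}_l)^T$, where $w_{kl}$ is the weight assigned to edge $(k,l)$ by Definition \ref{def-tauLap}. Since $\tau_{kl}\dmax>\frac{\pi}{2}$, either \eqref{tauLap3} or \eqref{tauLap4} applies (or \eqref{tauLap5} if $a_{kl}<0$; we will treat $a_{kl}>0$ as the intended case). Hence $w_{kl}=a_{kl}\cos(\tau_{kl}\dmax)<0$ or $w_{kl}=-a_{kl}<0$. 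So $\tiLgtau$ is a negative rank-one perturbation of the matrix $\bm{M}:=\tiLgtau^{(k,l)}$, which by hypothesis is PSD with kernel $\textup{span}\{\bm{1}_n\}$.

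The key step is the standard criterion for a rank-one downdate of a PSD matrix, restricted to $\bm{1}_n^\perp$. Set $\bm{b}=\bm{e}_k-\bm{e}_l\perp\bm{1}_n$ and $c=-w_{kl}>0$. On the invariant subspace $\bm{1}_n^\perp$, $\bm{M}$ is positive definite with inverse $\bm{M}^\dag$. We claim that $\bm{M}-c\bm{b}\bm{b}^T$ is positive definite on $\bm{1}_n^\perp$ if and only if $1-c\,\bm{b}^T\bm{M}^\dag\bm{b}>0$. To prove this, congruence by $(\bm{M}^\dag)^{1/2}$ on $\bm{1}_n^\perp$ turns the matrix into $\bm{I}-c\bm{y}\bm{y}^T$ with $\bm{y}=(\bm{M}^\dag)^{1/2}\bm{b}$. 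Its eigenvalues are $1$ (with multiplicity) and $1-c\|\bm{y}\|^2$. By \eqref{reffcompat}, $\bm{b}^T\bm{M}^\dag\bm{b}=\reff(\tiLgtau^{(k,l)},\{k\},\{l\})>0$. The condition therefore reads $c<1/\reff(\tiLgtau^{(k,l)},\{k\},\{l\})$, which is exactly \eqref{singlenegative} in its two cases. Moreover, $\tiLgtau\bm{1}_n=\bm{0}$ always holds, so positive definiteness on $\bm{1}_n^\perp$ is the same as $\tiLgtau$ being PSD with a simple zero eigenvalue. Theorem \ref{thm-tiLtau} then gives average consensus.

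It remains to show that this condition is equivalent to $\reff(\tiLgtau,\{k\},\{l\})>0$. We will use the Sherman--Morrison formula on $\bm{1}_n^\perp$. Write $R_0=\reff(\tiLgtau^{(k,l)},\{k\},\{l\})$. When $1-cR_0\neq0$, we get $\bm{b}^T\tiLgtau^\dag\bm{b}=R_0+\frac{c R_0^2}{1-cR_0}=\frac{R_0}{1-cR_0}$, so $\reff(\tiLgtau,\{k\},\{l\})>0$ if and only if $1-cR_0>0$. Equivalently, in the circuit picture, $1/\reff(\tiLgtau,\{k\},\{l\})=1/R_0+w_{kl}$ is the parallel combination of the edge $(k,l)$ with the rest of the network. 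This can also be obtained from the Schur-complement Definition \ref{defreff} with $\mV_a=\{k\}$, $\mV_b=\{l\}$: the Schur complement of $\tiLgtau$ is that of $\tiLgtau^{(k,l)}$ plus $w_{kl}\bm{b}\bm{b}^T$ restricted to the two nodes, and adding a direct edge between $k$ and $l$ leaves the eliminated block $\bm{L}_{cc}$ unchanged. The latter route is cleaner and avoids pseudo-inverses.

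The main obstacle is the careful bookkeeping on the degenerate cases. These are when $1-cR_0=0$, where $\tiLgtau$ acquires a second zero eigenvalue and the effective resistance is undefined or infinite, and when $\bm{L}_{cc}$ in Definition \ref{defreff} is singular. We handle this by working with the Schur-complement form, where the $2\times2$ Schur complement of $\tiLgtau$ equals $(1/R_0+w_{kl})\begin{bmatrix}1&-1\\-1&1\end{bmatrix}$. Its positive semidefiniteness with a simple zero eigenvalue is then read off directly. The matrix $\bm{L}_{cc}$ is the same principal submatrix for both $\tiLgtau$ and $\tiLgtau^{(k,l)}$, and it is positive definite by interlacing from the hypothesis on $\tiLgtau^{(k,l)}$.
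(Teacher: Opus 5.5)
Your argument is correct, but it reaches the conclusion by a different route than the paper. Write $R_0=\reff(\tiLgtau^{(k,l)},\{k\},\{l\})$.

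\textbf{The paper's route.} It first applies a generalized Sherman--Morrison--Woodbury formula for the Moore--Penrose inverse to get $\reff(\tiLgtau,\{k\},\{l\})=w_{kl}^{-1}R_0/(w_{kl}^{-1}+R_0)$, which gives the equivalence with \eqref{singlenegative}. It then shows that $\reff(\tiLgtau,\{k\},\{l\})>0$ forces $\textup{In}(\tiLgtau)=(n-1,0,1)$. This uses Haynsworth inertia additivity on a bordered matrix $\bm{X}$ built from $\tiLgtau^{(k,l)}$, $\bm{e}_{kl}$ and $|w_{kl}^{-1}|$, whose two Schur complements are the scalar $-(w_{kl}^{-1}+R_0)$ and $\tiLgtau$.

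\textbf{Your route.} You go directly from \eqref{singlenegative} to the spectral conclusion, via the congruence on $\bm{1}_n^\perp$ that turns $\tiLgtau$ into $\bm{I}-c\bm{y}\bm{y}^T$. The effective-resistance reformulation comes afterwards.

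\textbf{What each buys.} Your route is more elementary. It needs no inertia formula with a singular pivot block, which for $\tiLgtau^{(k,l)}$ requires the range condition $\bm{e}_{kl}\in\textup{range}(\tiLgtau^{(k,l)})$. It also gives the converse: under the hypothesis on $\tiLgtau^{(k,l)}$, \eqref{singlenegative} is necessary and sufficient for $\tiLgtau$ to be PSD with a simple zero eigenvalue. Your Kron-reduction variant, $1/\reff(\tiLgtau,\{k\},\{l\})=1/R_0+w_{kl}$, follows straight from Definition~\ref{defreff}. It avoids \eqref{reffcompat} for a matrix whose definiteness is not yet known. Combined with Haynsworth's formula for the nonsingular block $\bm{L}_{cc}$, it is by itself a complete proof. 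The paper's bordered-matrix argument, in turn, stays inside the Schur-complement/inertia framework of \cite{song2019extension} that underlies Theorem~\ref{thm-reff}.

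\textbf{One small fix.} Cauchy interlacing only gives that $\bm{L}_{cc}$ is PSD. Strict definiteness holds because a null vector of $\bm{L}_{cc}$, padded with zeros, would lie in $\ker \tiLgtau^{(k,l)}=\textup{span}\{\bm{1}_n\}$ while vanishing at node $k$.
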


\begin{IEEEproof}
    First, let us prove that $\reff(\tiLgtau, \{k\}, \{l\})>0$ is equivalent to \eqref{singlenegative}.
    By Definition \ref{def-tauLap}, it holds that
    \begin{equation}\label{Lgtaukl}
    \begin{split}
         \tiLgtau = \tiLgtau^{(k,l)} +\bm{e}_{kl}^T w_{kl}\bm{e}_{kl}.
    \end{split}
    \end{equation}
    where $\bm{e}_{kl}=\bm{e}_k-\bm{e}_l$.
    Since $\tiLgtau^{(k,l)}$ is PSD with a simple zero eigenvalue, the generalized Sherman-Morrison-Woodbury formula \cite{jiao2025unified} applies to the Moore-Penrose inverse of \eqref{Lgtaukl}, which gives
    \begin{equation}\label{SMW}
    \begin{split}
         \tiLgtau^{\dag} = (\tiLgtau^{(k,l)})^{\dag} - y^{-1}(\tiLgtau^{(k,l)})^{\dag}\bm{e}_{kl}^T\bm{e}_{kl}(\tiLgtau^{(k,l)})^{\dag}
    \end{split}
    \end{equation}
    where
    \begin{equation}\label{}
    \begin{split}
         y=w_{kl}^{-1}+\bm{e}_{kl}^T(\tiLgtau^{(k,l)})^{\dag}\bm{e}_{kl}
         =w_{kl}^{-1}+\reff(\tiLgtau^{(k,l)},\{k\},\{l\}).
    \end{split}
    \end{equation}
    Substituting \eqref{SMW} into \eqref{reffcompat}, we obtain the following expression for $\reff(\tiLgtau, \{k\}, \{l\})$
    \begin{equation}\label{SMW2}
    \begin{split}
         &\reff(\tiLgtau, \{k\}, \{l\}) = \bm{e}_{kl}^T\Lg^{\dag}\bm{e}_{kl}\\
         &= \reff(\tiLgtau^{(k,l)},\{k\},\{l\}) - [\reff(\tiLgtau^{(k,l)},\{k\},\{l\})]^2y^{-1} \\
         &=\frac{w_{kl}^{-1}\cdot\reff(\tiLgtau^{(k,l)},\{k\},\{l\})}{w_{kl}^{-1}+\reff(\tiLgtau^{(k,l)},\{k\},\{l\})}.
    \end{split}
    \end{equation}
    Since $w_{kl}<0$ and $\reff(\tiLgtau^{(k,l)},\{k\},\{l\})>0$ (due to $\tiLgtau^{(k,l)}$ is PSD), \eqref{SMW2} implies $\reff(\tiLgtau, \{k\}, \{l\})>0$ if and only if $w_{kl}^{-1}+\reff(\tiLgtau^{(k,l)},\{k\},\{l\})<0$ which is equivalent to \eqref{singlenegative}.

    Next, we prove that $\reff(\tiLgtau, \{k\}, \{l\})>0$ ensures $\tiLgtau$ being PSD with a simple zero eigenvalue. Construct the matrix
    \begin{equation}\label{}
    \begin{split}
         \bm{X} = \begin{bmatrix}
                    \tiLgtau^{(k,l)} & \bm{e}_{kl} \\
                    \bm{e}_{kl}^T & |w_{kl}^{-1}| \\
                  \end{bmatrix}
    \end{split}
    \end{equation}
    and it follows from [\citen{ouellette1981schur}, Theorem 4.7] that
    \begin{equation}\label{inertia}
    \begin{split}
         \textup{In}(\bm{X})&=\textup{In}(\tiLgtau^{(k,l)}) + \textup{In}(\bm{X}/\tiLgtau^{(k,l)})\\
         &= \textup{In}(|w_{kl}^{-1}|) + \textup{In}(\bm{X}/|w_{kl}^{-1}|)
    \end{split}
    \end{equation}
    where $\textup{In}()=(i_+, i_-, i_0)$ denotes the matrix inertia with $i_+, i_-, i_0$ being the number of positive, negative and zero eigenvalues, respectively; and the Schur complements are
    \begin{equation}\label{}
    \begin{split}
    \bm{X}/\tiLgtau^{(k,l)}&=|w_{kl}^{-1}|-\bm{e}_{kl}^T(\tiLgtau^{(k,l)})^{\dag}\bm{e}_{kl}=-y\\
    \bm{X}/|w_{kl}^{-1}|&=\tiLgtau^{(k,l)}-\bm{e}_{kl}|w_{kl}^{-1}|\bm{e}_{kl}^T=\tiLgtau.
    \end{split}
    \end{equation}
    If $\reff(\tiLgtau, \{k\}, \{l\})>0$, then we have $y<0$ and hence $\textup{In}(\bm{X}/\tiLgtau^{(k,l)})=\textup{In}(-y)=(1,0,0)$.
    Also it is known that $\textup{In}(\tiLgtau^{(k,l)})=(n-1,0,1)$ and $\textup{In}(|w_{kl}^{-1}|)=(1,0,0)$.
    Then, by \eqref{inertia} we have $\textup{In}(\bm{X}/|w_{kl}^{-1}|)=\textup{In}(\tiLgtau)=(n-1,0,1)$, i.e., $\tiLgtau$ is PSD with a simple zero eigenvalue. Therefore, system \eqref{consensus} reaches an average consensus by Theorem \ref{thm-tiLtau}.
\end{IEEEproof}

\indent
Focusing on the antagonistic interaction induced by one particular large-delay link $(k,l)$, Corollary \ref{cor-reff} estimates an upper bound for the delay severity of this link that will not impact consensusability.
This upper bound turns out to be $1/\reff(\tiLgtau^{(k,l)},\{k\},\{l\})$, i.e., the ``effective conductance'' (as a measure of coupling strength) between agent $k$ and agent $l$ in the remaining network excluding the link $(k,l)$.
It implies that the system can tolerate a larger delay in the link $(k,l)$ if the remaining network has reduced delays or increased values of edge weights.
The case study will illustrate how Corollary \ref{cor-reff} can be used to guide the placement of new links with delays.

\section{Numerical examples}\label{seccase}
\subsection{Illustration of theoretical results}
The obtained results are illustrated by numerical experiments on the 14-agent network shown in Fig. \ref{fig-case14}.
This network originates from the IEEE 14-bus system which is a benchmark system in power grids.

\subsubsection{single delayed link}
Suppose all the links in the 14-agent network are unweighted ($a_{ij}=1$), the link $(2,5)$ has a delay and all the other links have zero delays.
In this case, a direct calculation gives that the delay-independent signed Laplacian $\tiLgz$ is PSD with a simple zero eigenvalue.
By Corollary \ref{cor-tau-ind}, the network consensus is robust to an unbounded delay in the link $(2,5)$. This conclusion is illustrated by Fig.~\ref{fig-delay1000} showing a consensus is achieved when the link $(2,5)$ has a delay of 1000 seconds, despite its slow convergence\footnote{The initial values $\bm{x}(0)$ for the consensus simulations in Fig.~\ref{fig-delay} and Fig.~\ref{fignewlink} are randomly chosen from the interval $[0,1]$.}.

\subsubsection{multiple delayed links}
Suppose three links $(2,5)$, $(6,13)$, $(2,4)$ have delays of $\tau_{2,5}=0.14$, $\tau_{6,13}=0.28$ and $\tau_{2,4}=0.42$.
The links $(6,13)$ and $(7,9)$ induce antagonistic interactions $w_{6,13}=-0.2421$ and $w_{2,4}=-0.9136$, while the link $(2,5)$ still preserves a cooperative interaction $w_{2,5}=0.2484$.
Consequently, the delay-embedded signed Laplacian $\tiLgtau$ is PSD with a simple zero eigenvalue.
It implies that consensus can be achieved, as shown in Fig.~\ref{fig-delaymulti}.

\begin{figure}[!h]
  \centering
  \includegraphics[width=2.4in]{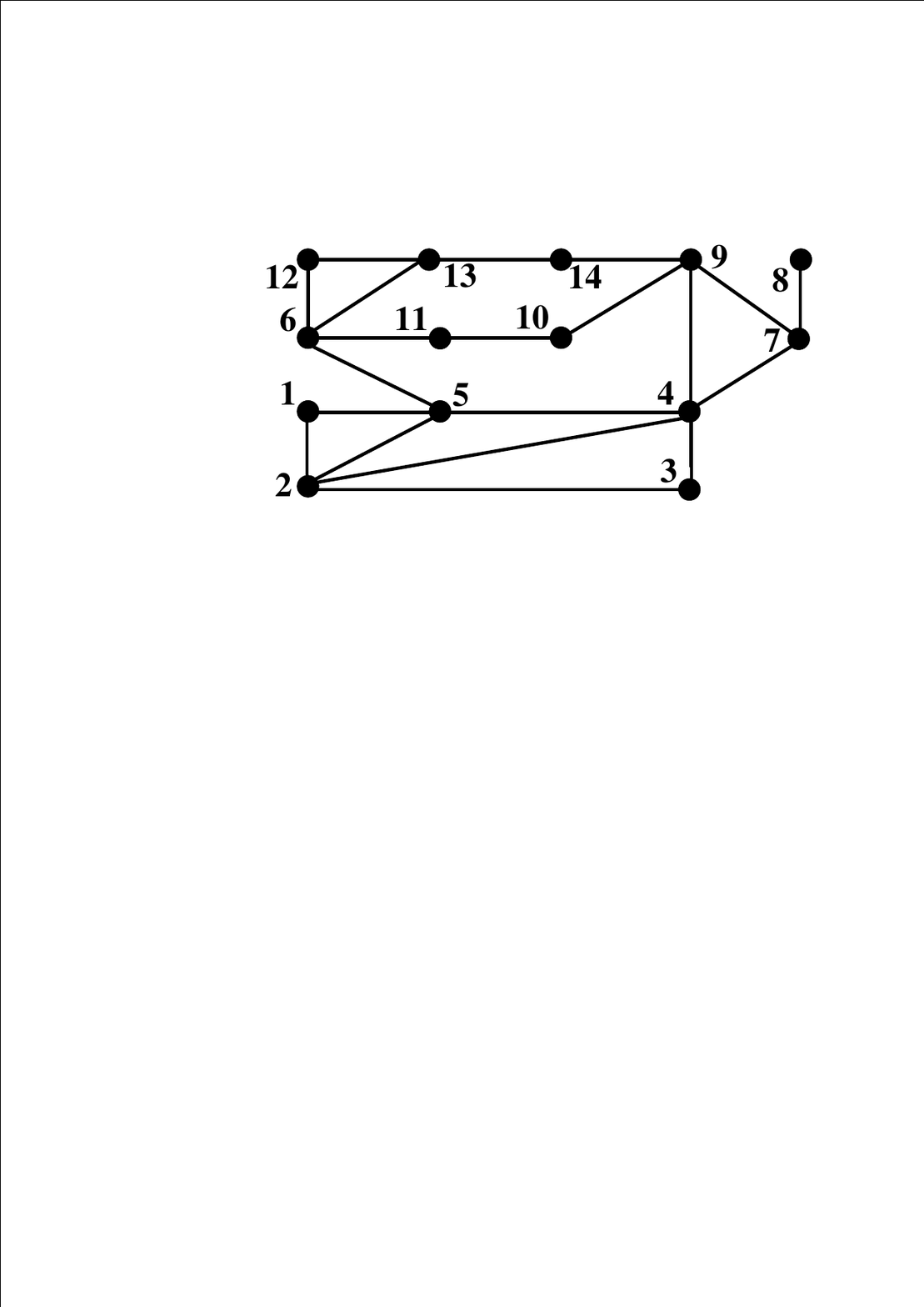}
  \caption{Network topology of the 14-node test system.}
  \label{fig-case14}
\end{figure}

%\begin{figure}[!h]
%  \centering
%  \includegraphics[width=3.0in]{2_5delay1000}
%  \caption{Consensus under an unweighted network and $\tau_{2,5}=1000$.}
%  \label{fig-delay1000}
%\end{figure}
%
%\begin{figure}[!h]
%  \centering
%  \includegraphics[width=3.0in]{multidelay2}
%  \caption{Consensus under an unweighted network and multiple delays $\tau_{2,5}=0.14$, $\tau_{6,13}=0.28$, $\tau_{2,4}=0.42$.}
%  \label{fig-delaymulti}
%\end{figure}

\begin{figure}[!h]
  \centering
  \subfigure[Single delay $\tau_{2,5}=1000$.]{
  \label{fig-delay1000} %% label for first subfigure
  \includegraphics[width=2.9in]{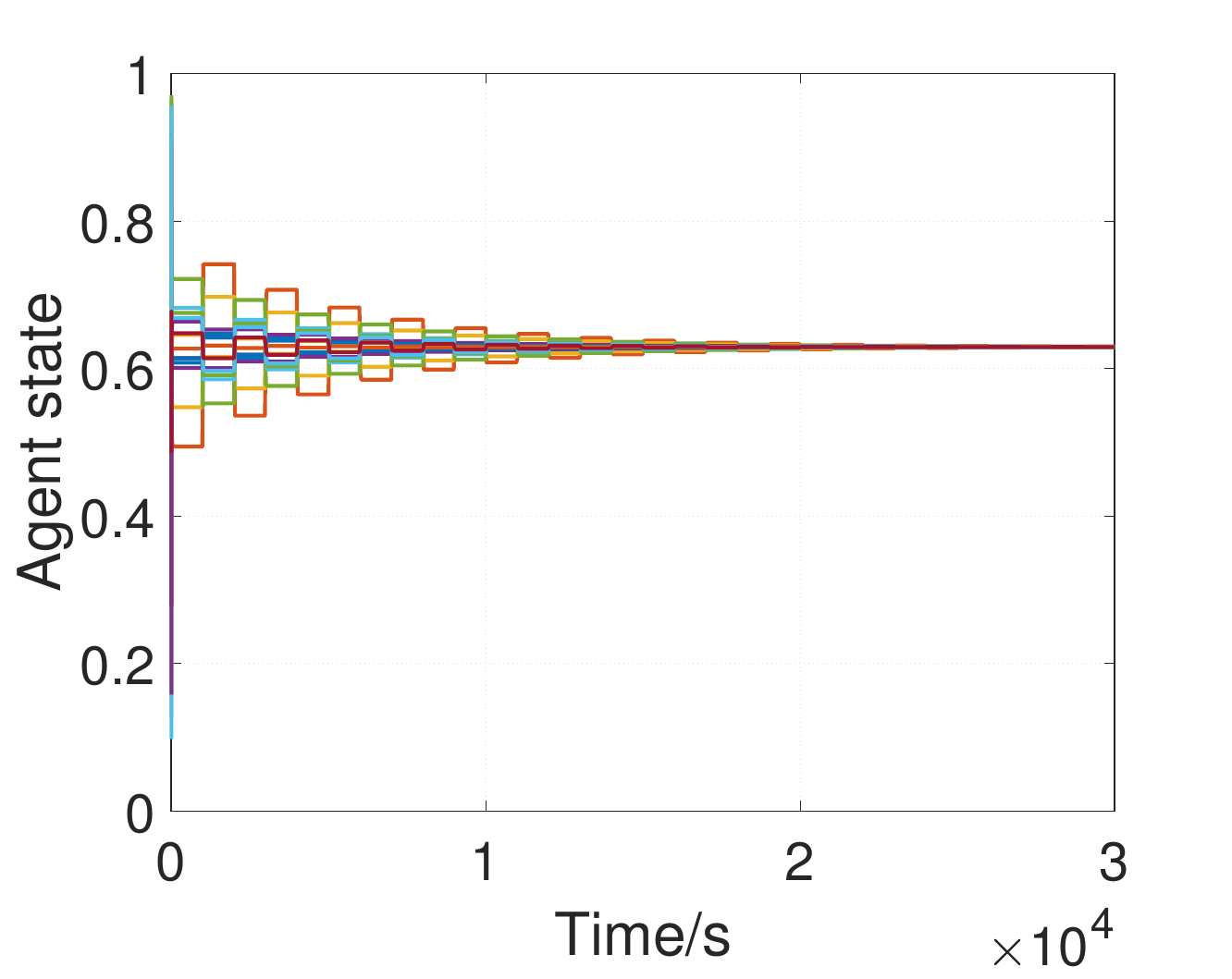}}
  \subfigure[multiple delays $\tau_{2,5}=0.14$, $\tau_{6,13}=0.28$, $\tau_{2,4}=0.42$.]{
  \label{fig-delaymulti} %% label for second subfigure
  \includegraphics[width=2.9in]{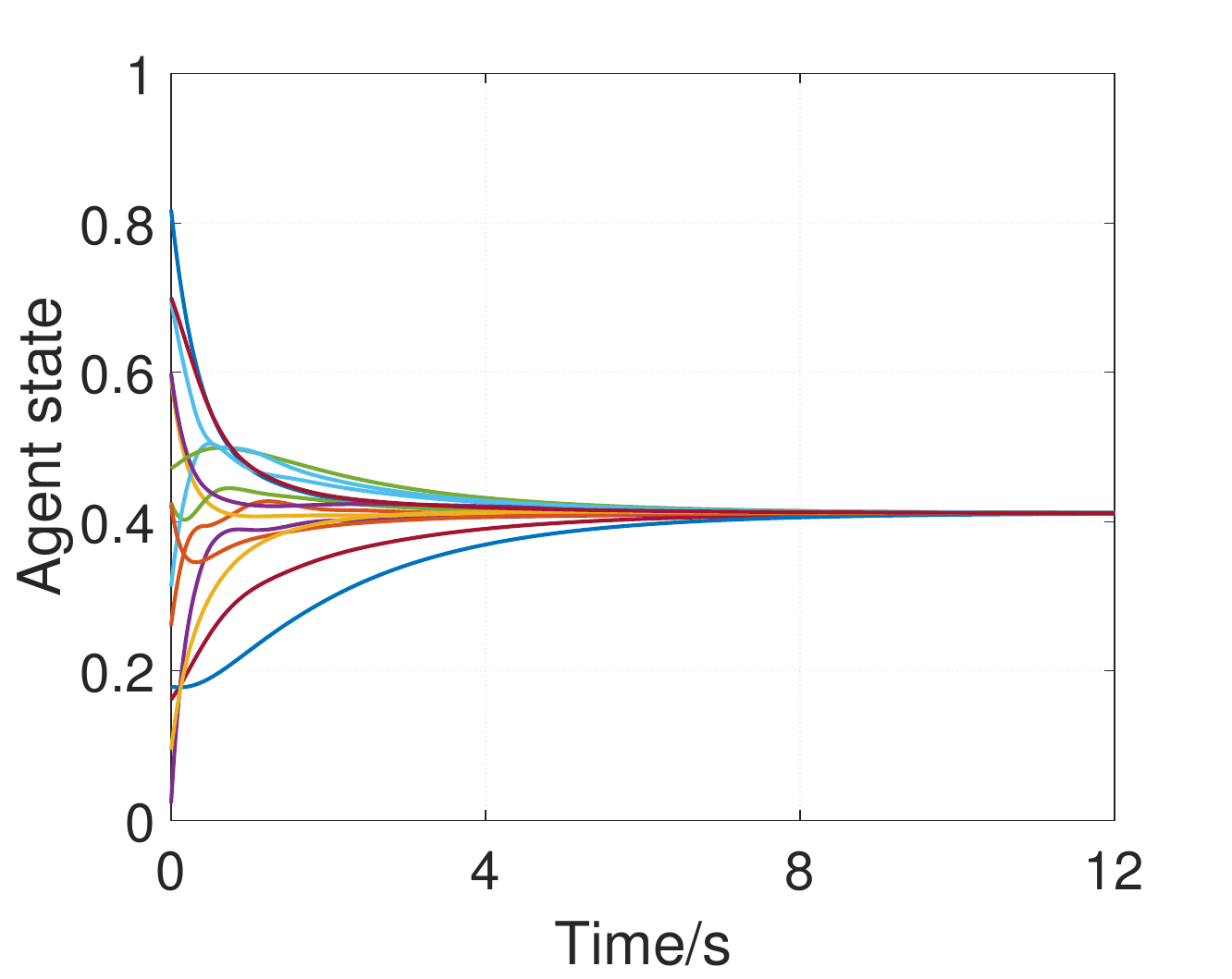}}
  \caption{Consensus simulation under delayed links.}
  \label{fig-delay} %% label for entire figure
\end{figure}

\subsection{Simple application in network design}
For the network with multiple delayed links described in the previous subsection, consider adding a link with $a_{kl}=0.77$ and $\tau_{kl}=10$, where $(k,l)$ is to be determined.
Treating the Laplacians before and after link addition as $\tiLgtau^{(k,l)}$ and $\tiLgtau$, respectively, we can apply Corollary \ref{cor-reff} to find a suitable place to add link $(k,l)$.
For instance, a direct calculation gives that $1/\reff(\tiLgtau^{(3,5)},\{3\},\{5\})=0.7772$ and $1/\reff(\tiLgtau^{(2,11)},\{2\},\{11\})=0.0812$, which means \eqref{singlenegative} is satisfied if $(k,l)=(3,5)$, and severely violated if $(k,l)=(2,11)$.
It implies that placing the new link at $(3,5)$ surely guarantees consensusability, and placing it at $(2,11)$ is highly risky, which is confirmed by the simulation results in Fig. \ref{fignewlink}.

\begin{figure}[!h]
  \centering
  \subfigure[After adding link $\tau_{3,5}=10$.]{
  \label{fig-35new} %% label for first subfigure
  \includegraphics[width=2.9in]{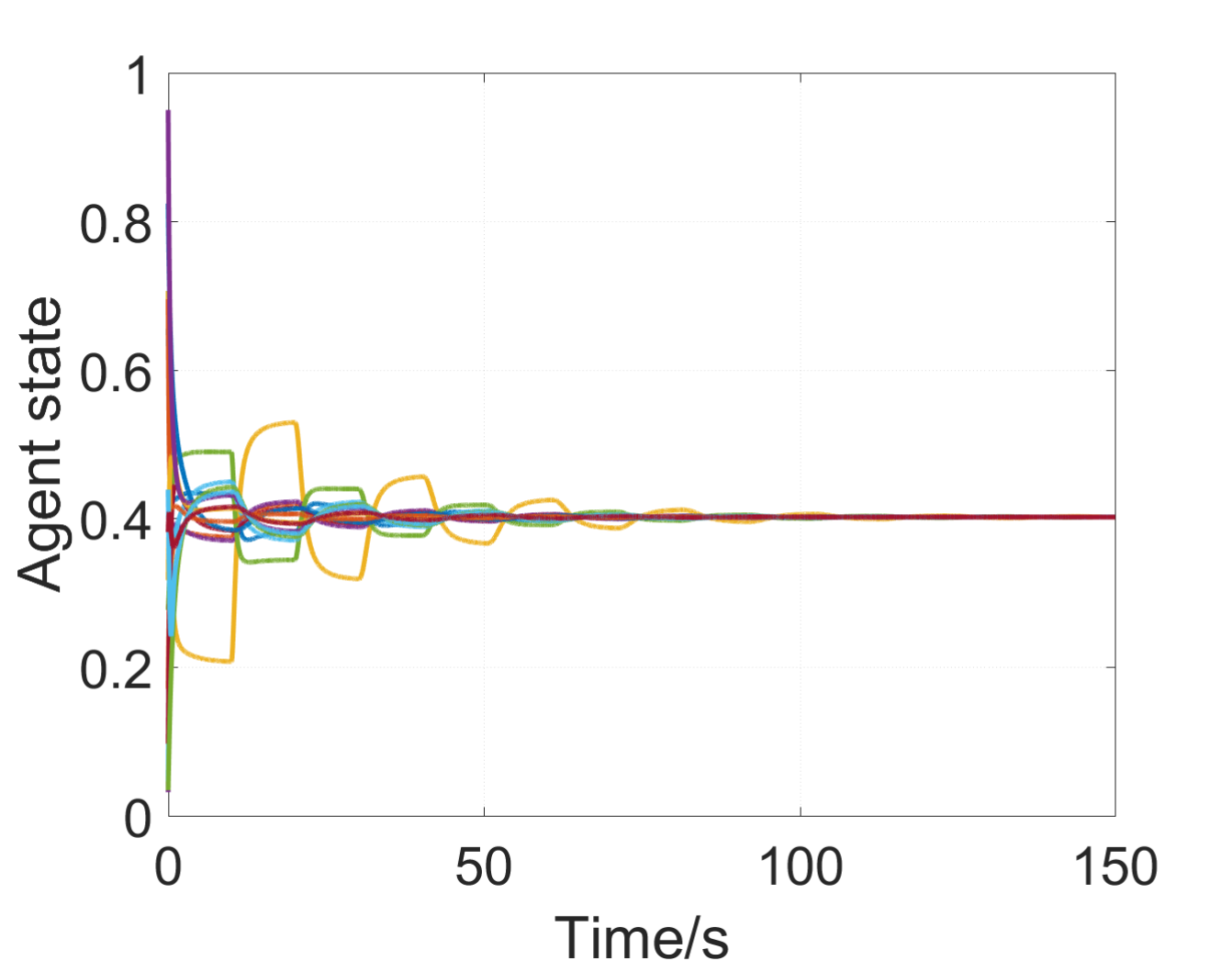}}
  \subfigure[After adding link $\tau_{2,11}=10$.]{
  \label{fig-211new} %% label for second subfigure
  \includegraphics[width=2.9in]{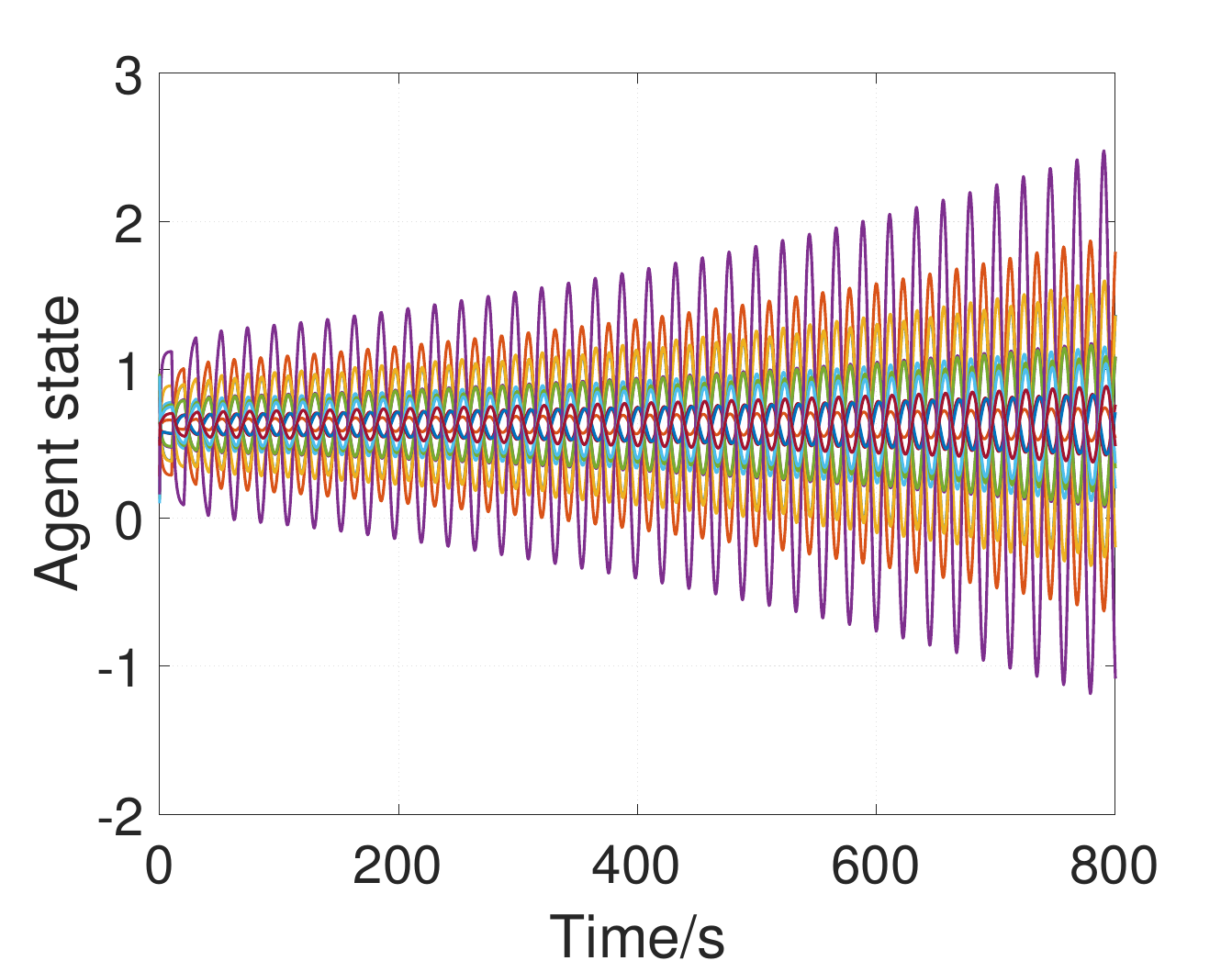}}
  \caption{Consensus simulation after link addition.}
  \label{fignewlink} %% label for entire figure
\end{figure}

\section{Conclusion}\label{secconclu}
The consensus with unbounded heterogeneous delays has been investigated by constructing several type of signed Laplacians carrying the delay information over the network topology.
The obtained results provide a network-oriented interpretation for the impact of large-delay links on consensusability.

\ifCLASSOPTIONcaptionsoff
  \newpage
\fi

{\footnotesize
\bibliographystyle{IEEEtran}
\bibliography{IEEEabrv,delay}

}

% that's all folks
\end{document}